\documentclass[11pt]{article}

\usepackage[utf8]{inputenc}
\usepackage[T1]{fontenc}
\usepackage[main=english,ngerman]{babel}
\usepackage{lmodern}
\usepackage{tcolorbox}
\usepackage{enumerate}
\usepackage{multirow}
\usepackage{nicefrac}
\usepackage{caption}
\usepackage{graphicx}
\usepackage{tikz}
\usetikzlibrary{patterns}
\usetikzlibrary{arrows}
\usepackage{tikz-3dplot}
\usepackage{subfigure}
\usepackage{pgfplots}
\usepackage{hyperref}
\usepackage{picture}
\usepackage[absolute,overlay]{textpos}
\usepackage{algorithm}
\usepackage{algpseudocode}
\usepackage{cite}

\pgfplotsset{
  log x ticks with fixed point/.style={
      xticklabel={
        \pgfkeys{/pgf/fpu=true}
        \pgfmathparse{exp(\tick)}%
        \pgfmathprintnumber[fixed relative, precision=3]{\pgfmathresult}
        \pgfkeys{/pgf/fpu=false}
      }
  },
  log y ticks with fixed point/.style={
      yticklabel={
        \pgfkeys{/pgf/fpu=true}
        \pgfmathparse{exp(\tick)}%
        \pgfmathprintnumber[fixed relative, precision=3]{\pgfmathresult}
        \pgfkeys{/pgf/fpu=false}
      }
  }
}

\usepackage{geometry}
\setlength{\parindent}{1em}
\usepackage{float}

\usepackage{amsmath,amssymb,amsthm}
\usepackage{mathtools}
\usepackage[mathscr]{euscript}
\usepackage{xpatch}

\providecommand{\keywords}[1]
{
  \small	
  \textbf{\textit{Keywords---}} #1
}

\newtheorem{definition}{Definition}
\theoremstyle{remark}
\newtheorem{remark}{Remark}
\theoremstyle{theorem}

\theoremstyle{theorem}
\newtheorem{lemma}{Lemma}

\newtheorem{assumption}{Assumption}

%

\usepackage[rightcaption]{sidecap}

\renewcommand{\o}{\Omega}

\newcommand{\p}{\boldsymbol}
\newcommand{\f}{\boldsymbol}

\graphicspath{{Images/}}

\begin{document}
	
	\title{Smooth multi-patch scaled boundary isogeometric analysis for Kirchhoff-Love shells}
    \author{Mathias Reichle$^1$\footnote{Corresponding author: reichle@lbb.rwth-aachen.de \qquad \qquad \qquad \qquad Preprint submitted to Springer}, Jeremias Arf$^2$, Bernd Simeon$^2$ and Sven Klinkel$^1$}
    \date{ $^1$ Chair of Structural Analysis and Dynamics, RWTH Aachen, Germany \\
    $^2$ Departement of Mathematics, RPTU Kaiserslautern-Landau, Germany}
	\maketitle
	\vspace{-0.9cm}
\begin{abstract}
In this work, a linear Kirchhoff-Love shell formulation in the framework of scaled boundary isogeometric analysis is presented that aims to provide a simple approach to trimming for NURBS-based shell analysis. 
To obtain a global $C^1$-regular test function space for the shell discretization, an inter-patch coupling is applied  with adjusted basis functions in the vicinity of the scaling center to ensure the approximation ability. Doing so,  the scaled boundary geometries are related to the concept of analysis-suitable $G^1$ parametrizations.This yields a coupling of patch boundaries in a strong sense that is restricted to $G^1$-smooth surfaces. The proposed approach is advantageous to trimmed geometries due to the incorporation of the trimming curve in the boundary representation that provides an exact representation in the planar domain. The potential of the approach is demonstrated by several problems of untrimmed and trimmed geometries of Kirchhoff-Love shell analysis evaluated against error norms and displacements. Lastly, the applicability is highlighted in the analysis of a violin structure including arbitrarily shaped patches. 
  \end{abstract}
\keywords{Isogeometric analysis, Analysis-suitable $G^1$, Scaled boundary method, Kirchhoff-Love shell analysis}

\section{Introduction}
In modern engineering applications such as automotive engineering and aerospace engineering, computer-aided design (CAD) is a key method to design structural components. The mathematical underlying of these drawings usually consists of B-splines and non-uniform rational B-splines (NURBS) and provides that by using isogeometric analysis (IGA) \cite{Hughes2005} for the numerical analysis, the model for the design and the analysis are the same. A major issue of numerical analyses is the treatment of trimmed geometries whereas parts of the initial geometry are cut out \cite{Marussig2018}. This implies, that a numerical analysis in common frameworks is not conductible anymore in a straightforward manner. Several approaches are presented in the literature for treating trimming such as \cite{Coradello2020hierarchically,Leidinger2019,Coradello2020} mostly considering an approximation of the trimming curve and a limitation to quadrilateral sections.\\
Isogeometric analysis is especially suitable for Kirchhoff-Love shells \cite{Kiendl2009} since the formulation is derived on the change of the normal vector which requires well-defined second order derivatives of the basis functions. This is naturally fulfilled within single patches of quadratic or higher order. However, across patch boundaries, IGA patches are not naturally $C^{p-1}$ continuous. This implies that the Kirchhoff-Love shell formulation is not applicable for multi-patch structures in IGA. As complex shell structures often consist of multi-patch geometries, $C^1$ continuity needs to be enforced at patch boundaries. Common approaches to ensure higher continuity on multiple patches are Nitsche methods \cite{Guo2015,benzaken2021}, mortar methods \cite{dornisch2015,Chasapi2020}, and penalty methods \cite{breitenberger2015,herrema2019,Coradello2021,proserpio2022}, among others \cite{kiendl2010,schuss2019}. However, all of these methods are coupling techniques in a weak sense that can only represent the isogeometric discretization space approximately $C^1$ smooth. To ensure exact $C^1$ smooth multi-patch geometries, a special class of $G^1$ continuous multi-patch surfaces, namely analysis-suitable $G^1$ surfaces \cite{Collin2016}  can be utilized in the two-dimensional domain. An approach of analysis-suitable $G^1$ spaces that will be utilized in this work, is presented in \cite{Farahat2022} for Kirchhoff-Love shells with restriction to quadrilateral patches. 

A promising approach to take the exact boundary of geometries more into account is the scaled boundary isogeometric analysis (SB-IGA) \cite{Natarajan2015,Arioli2019}. It combines the advantages of the scaled boundary finite element method \cite{song1997,Wolf2000} and the isogeometric analysis providing patches of arbitrarily shaped boundaries. Since the discretization of SB-IGA patches does not only consist of multi-patches for trimmed patches but also in a single IGA-patch domain, a coupling approach is inherently necessary to obtain $C^1$-continuity on the SB-IGA domain.

This contribution presents a Kirchhoff-Love shell in the framework of scaled boundary isogeometric analysis with analysis-suitable $G^1$ surfaces that are especially suitable for complex, trimmed geometries by incorporating the trimming curve into the boundary representation of the geometry in a simple way. Moreover, to preserve the approximation ability of the SB-IGA test functions, special scaling center test functions have to be introduced. A step that can also be observed if smooth polar splines are considered \cite{toshniwal2017}. The proposed approach is tested on its general performance by untrimmed shell structures and the potential is later outlined by trimmed shell structures in the linear case. The advantages are summarized as follows.
\begin{itemize}
    \item A linear Kirchhoff-Love shell formulation is presented in isogeometric boundary representation.

    \item A patch coupling of smooth patches across boundary edges is applied utilizing the analysis-suitable $G^1$ concept including modifications in the vicinity of the scaling center.  

    \item Advantages of the approach to discretized star-shaped domains of an arbitrary number of boundaries are outlined.

    \item Examples of discretizations for complex topologies are presented and evaluated. 
\end{itemize}

The work is structured as follows. In Section \ref{Kirchhoff}, the linear formulation of the Kirchhoff-Love theory is provided. Section \ref{subsection:B-splines_and_SB-IGA} presents the fundamental concept of SB-IGA and the notation herein. Further, Section \ref{Section:PlanarC1} discusses the choice of basis functions for $C^1$ continuity in the planar domain and the application to Kirchhoff-Love shells with the incorporation of trimming. The proposed approach is tested in numerical examples in Section \ref{Section:NumExa}. Finally, the contribution is concluded in Section \ref{Section:ConOut} including an outlook to further research.

\section{Kirchhoff-Love shell formulation}\label{Kirchhoff}
The Kirchhoff-Love shell formulation is recalled in this section, based on the derivations presented in \cite{Kiendl2009,krysl1996,benzaken2021}. The formulation is derived considering a single patch but can be extended to scaled boundary multi-patch structures as outlined in the following sections. Neglecting cross-sectional shear stresses, the Kirchhoff-Love shell formulation states the assumption that the initial director vectors to the shell center surface remain perpendicular during deformation. The description of the shell is reduced from the domain of the shell body to the mid-surface $\Omega \subset \mathbb{R}^3$ utilizing a convective covariant space $\tilde{\Omega} \subset \mathbb{R}^2$. Utilizing the unit normal vector on the mid-surface, the initial shell structure reduced on the mid-surface is described as
\begin{equation}
\mathbf{S}(\theta^1,\theta^2,\theta^3) = \mathbf{a}(\theta^1,\theta^2) + \theta^3 \mathbf{a}_3(\theta^1,\theta^2)
\end{equation}
where $\mathbf{a}$ is the position vector in the initial configuration, $\mathbf{a}_3$ is the unit normal vector on each point on the surface, and $\theta^3$ the thickness coordinate ranging from $-t/2$ to $+t/2$. Further, we use the Greek indices $\alpha,\beta, \lambda, \mu \in \{1,2\}$ and Latin indices $i,j \in \{ 1,2,3 \}$ and drop the dependence of $\theta^{\alpha}$.  To obtain the normal vector on the surface, the partial derivatives on the mid-surface are utilized to construct the unit normal vectors with 
\begin{equation}
    \mathbf{a}_{\alpha} = \mathbf{a}_{,\alpha} = \frac{\partial \mathbf{a}}{\partial \theta^{\alpha}} \qquad \mathbf{a}_3 = \frac{\mathbf{a}_1 \times \mathbf{a}_2}{|\mathbf{a}_1 \times \mathbf{a}_2|}.
\end{equation}
In the following, we will indicate differentiation w.r.t. the corresponding coordinate $(\cdot )_{,\alpha}= \frac{\partial}{\partial \theta^{\alpha}}$. Further, we define the $L_2$-norm of the normal vector $\mathbf{a}_3$ as $\bar{J}$. Utilizing the partial derivatives, we introduce the covariant metric coefficients constructed by 
\begin{equation}
    a_{\alpha\beta} = \mathbf{a}_\alpha \cdot \mathbf{a}_\beta.
\end{equation}
Due to the orthogonality condition, we construct the contravariant basis system
$\mathbf{a}^{\alpha} \cdot \mathbf{a}_{\beta} = \delta_{\beta}^{\alpha}$,
that yields a relation of the covariant and the contravariant metric system by the inverse operator $\{a^{\alpha\beta}\} = \{a_{\alpha\beta}\}^{-1}$, where $\{ \cdot \}$ denotes the matrix components. \\
In the discrete setting of the boundary conditions of the boundary value problem, we denote the boundary of the body $\Gamma= \partial \Omega$ with Dirichlet and Neumann boundary conditions $\Gamma_D$ and $\Gamma_N$ with the assumption that $\Omega$ is a smooth surface with well-defined derivatives of the curvature. The boundary conditions of $\Gamma_D$ and $\Gamma_N$ are non-overlapping $\Gamma = \overline{\Gamma_D \cup \Gamma_N}$. Further, we decompose the boundary conditions into $\Gamma_D = \Gamma_u \cup \Gamma_{\beta}$ and  $\Gamma_N = \Gamma_Q \cup \Gamma_M$ with $\Gamma_u \cap \Gamma_Q = \varnothing$ and $\Gamma_{\beta} \cap \Gamma_M = \varnothing$ since the prescribed displacements $\tilde{\mathbf{u}}$ and traction forces $\tilde{\mathbf{Q}}$, as well as the prescribed rotations $\tilde{\mathbf{\beta}}$ and traction moments $\tilde{\mathbf{M}}$ at the boundary are of energetically conjugate nature. A similar consideration is done at the corners $\chi \subset \Gamma$ with a split into Neumann boundary conditions $\chi_N \cap \Gamma_N$ and Dirichlet boundary conditions $\chi_D \cap \Gamma_D$.\\
Having defined the boundary, we choose some suitable discrete space $\mathbf{V}_h \in H^2(\Omega)$ in the sense of finite element methods and depending on the boundary conditions of the problem. Then, the discrete weak form seeks for $\mathbf{u}_h \in \mathbf{V}_h$ such that
\begin{subequations}
\begin{equation}
    a(\mathbf{u}_h,\mathbf{v}_h) = F(\mathbf{v}_h), \qquad \forall \; \mathbf{v}_h \in \mathbf{V}_h.
\end{equation}
Where the bilinear form $a$ and the linear form $F$ are expanded as 
\begin{equation}
 \label{eq:variational_form_0}
    a(\mathbf{u}_h,\mathbf{v}_h) = \int_{\Omega} \boldsymbol{\varepsilon}(\mathbf{v}_h) : \mathbf{n}(\mathbf{u}_h) \mathrm{d}\Omega + \int_{\Omega} \boldsymbol{\kappa}(\mathbf{v}_h) : \mathbf{m}(\mathbf{u}_h) \mathrm{d}\Omega.
\end{equation}
\begin{equation}
\label{eq:variational_form}
    F(\mathbf{v}_h) = \int_\Omega \mathbf{g} \cdot \mathbf{v}_h \mathrm{d}\Omega + \int_{\Gamma_Q} \tilde{\mathbf{Q}} \cdot \mathbf{v}_h \mathrm{d}\Gamma + \int_{\Gamma_M} \tilde{\mathbf{M}} \beta_n(\mathbf{v}_h) \mathrm{d}\Gamma + \sum_{e\in \chi_N} \left( \tilde{S} v_{3,h } \big|_e \right) 
\end{equation} 
\end{subequations}
where $\mathbf{g}$ is an applied body force, $\tilde{S}$ a prescribed twisting moment at each corner of $\chi_N$, and $\beta_n$ the normal rotation \cite{benzaken2021}. In this work, only body forces and traction forces are considered. 
Additionally, for a complete weak form, the kinematics and the constitutive relation are pending. The quantities $\boldsymbol{\varepsilon}$, $\boldsymbol{\kappa}$, $\mathbf{n}$ and $\mathbf{m}$ denote the membrane strains and bending strains depending on the test function $\mathbf{v}_h$ and the energetically conjugate membrane forces and bending moments depending on the discretized solution field $\mathbf{u}_h$. According to the Kirchhoff kinematical assumptions, transverse shear strains vanish and membrane and bending strains remain. Consequently, the decomposition of the linearized Green-Lagrange strain tensor reads
\begin{equation}
    \mathbf{E}_{lin}(\mathbf{v}_h) = \boldsymbol{\varepsilon}(\mathbf{v}_h) + \theta^3 \boldsymbol{\kappa}(\mathbf{v}_h).
\end{equation}
For a rigorous derivation of the linearization, see \cite{benzaken2021}. According to \cite{kaufmann2009} the linearized strain components are
\begin{subequations}
\begin{equation}
    \varepsilon_{\alpha \beta} = \frac{1}{2} \left(\mathbf{a}_{\beta} \cdot \mathbf{v}_{h,\alpha} +  \mathbf{a}_{\alpha} \cdot \mathbf{v}_{h,\beta}   \right)
\end{equation}
and
\begin{multline}
    \kappa_{\alpha \beta} = -\mathbf{a}_3 \cdot \mathbf{v}_{h,\alpha\beta} + \mathbf{a}_{\alpha,\beta} \cdot \mathbf{a}_3 \cdot \frac{1}{\Bar{J}} \left( \left(\mathbf{a}_2 \times \mathbf{a}_3 \right) \cdot \mathbf{v}_{h,1} -\left(\mathbf{a}_1 \times \mathbf{a}_3 \right) \cdot \mathbf{v}_{h,2} \right) \\ + \frac{1}{\Bar{J}} \left( \left(\mathbf{a}_{\alpha,\beta} \times \mathbf{a}_2 \right) \cdot \mathbf{v}_{h,1} -\left(\mathbf{a}_{\alpha,\beta} \times \mathbf{a}_1 \right) \cdot \mathbf{v}_{h,2} \right).
\end{multline}
\end{subequations}
Besides the kinematics, the stress resultants $\mathbf{n}$ and $\mathbf{m}$ are derived by applying a suitable constitutive relation. Thus, the second Piola-Kirchhoff stress tensor is analytically integrated through the thickness and reads as a decomposition into the stress tensors' components as  \cite{Coradello2021}
\begin{subequations}
\begin{equation}
    \mathbf{n}(\mathbf{u}_h) = t\cdot \mathbf{D} : \f{\varepsilon}(\mathbf{u}_h)
\end{equation}
and
\begin{equation}
    \mathbf{m}(\mathbf{u}_h) = \frac{t^3}{12} \cdot \mathbf{D} : \boldsymbol{\kappa}(\mathbf{u}_h)
\end{equation}
 with the materials fourth-order tensor $\mathbf{D}$. We want to remark, that the analytical integration herein assumes a constant thickness of the shell. Applying Hooke's law as a linear constitutive model, the tensor is  given as \cite{Coradello2021}
\begin{equation}
    \mathbf{D} = D^{\alpha\beta\lambda\mu} \mathbf{a}_{\alpha} \otimes \mathbf{a}_{\beta} \otimes \mathbf{a}_{\lambda} \otimes \mathbf{a}_{\mu} 
\end{equation}
and 
\begin{equation}
    D^{\alpha\beta\lambda\mu} = \frac{E}{2(1+\nu)} \left(a^{\alpha\lambda}a^{\beta\mu}+ a^{\alpha\mu}a^{\beta\lambda} + \frac{2\nu}{1-\nu}  a^{\alpha\beta}a^{\lambda\mu}\right).
\end{equation}
\end{subequations}
Herein the parameters $\nu$ and $E$ are the specific material parameters of Poisson's ratio and Young's modulus, respectively.  
It is remarkable, that due to the occurrence of second order derivatives of the basis functions in the bending components, $C^1$ continuity is required for the computation. This is fulfilled naturally for SB-IGA patches across the elements within each patch, however, at the patch boundaries, only $C^0$ continuity is given. Thus, $C^1$ continuity is enforced across patch boundaries in the manner of scaled boundary isogeometric analysis.
\section{B-splines and SB-IGA}
\label{subsection:B-splines_and_SB-IGA}
In this section, we introduce the notation herein and explain briefly some basic notions in the context of the SB-IGA ansatz. For more details regarding isogeometric analysis, we refer to \cite{IGA1,IGA3}. We start with the definition of B-spline functions and B-spline spaces, respectively.\\ 
An  increasing sequence of real numbers $\Xi \coloneqq \{ \xi_1 \leq  \xi_2  \leq \dots \leq \xi_{n+p+1}  \}$ for some $p \in \mathbb{N}$   is called \emph{knot vector}, where we assume  $0=\xi_1=\dots=\xi_{p+1}, \ \xi_{n+1}=\dots=\xi_{n+p+1}=1$, and call such knot vectors $p$-open. 
Further, the multiplicity of the $j$-th knot is denoted by $m(\xi_j)$.
Then  the univariate B-spline functions $\widehat{B}_{j,p}(\cdot)$ of degree $p$ corresponding to a given knot vector $\Xi$ is defined recursively by the \emph{Cox-DeBoor formula}:
\begin{align}
\widehat{B}_{j,0}(\zeta) \coloneqq \begin{cases}
1, \ \ \textup{if}  \ \zeta \in [\xi_{j},\xi_{j+1}) \\
0, \ \ \textup{else},
\end{cases}
\end{align}
\textup{and if }  $p \in \mathbb{N}_{\geq 1} \ \textup{we set}$ 
\begin{align}
\widehat{B}_{j,p}(\zeta)\coloneqq \frac{\zeta-\xi_{j}}{\xi_{j+p}-\xi_j} \widehat{B}_{j,p-1}(\zeta)  +\frac{\xi_{j+p+1}-\zeta}{\xi_{j+p+1}-\xi_{j+1}} \widehat{B}_{j+1,p-1}(\zeta).
\end{align}
Note  one sets $0/0=0$ to obtain  well-definedness. The knot vector $\Xi$ without knot repetitions is denoted by $\{ \psi_1, \dots , \psi_N \}$. \\
The  extension of the last spline definition to the multivariate case is achieved by a tensor product construction. In other words, we set for a given tensor knot vector   $\boldsymbol{\Xi} \coloneqq \Xi_1 \times   \dots \times \Xi_d $, where the $\Xi_{l}=\{ \xi_1^{l}, \dots , \xi_{n_l+p_l+1}^{l} \}, \ l=1, \dots , d$ are $p_l$-open,   and a given \emph{degree vector}   $\f{p} \coloneqq (p_1, \dots , p_d)$ for the multivariate case
\begin{align}
\widehat{B}_{\p{i},\f{p}}(\boldsymbol{\zeta}) \coloneqq \prod_{l=1}^{d} \widehat{B}_{i_l,p_l}(\zeta_l), \ \ \ \ \forall \, \p{i} \in \mathbf{I}, \ \  \boldsymbol{\zeta} \coloneqq (\zeta_1, \dots , \zeta_d),
\end{align}
with  $d$ as  the underlying dimension of the parametric domain $\widehat{\Omega}= (0,1)^d$ and $\mathbf{I}$ the multi-index set $ \mathbf{I}\coloneqq \{ (i_1,\dots,i_d) \  | \  1\leq i_l \leq n_l, \ l=1,\dots,d  \}$.\\
B-splines  fulfill several properties and for our purposes the most important ones are:
\begin{itemize}
	\item If  for all internal knots, the multiplicity satisfies $1 \leq m(\xi_j) \leq m \leq p, $ then the B-spline basis functions $\widehat{B}_{i,p}$ are global $C^{p-m}$-continuous. Therefore we define  in this case the regularity integer $r \coloneqq p-m$. Obviously, by the product structure, we get splines $\widehat{B}_{\p{i},\f{p}}$ which are $C^{r_l}$-smooth  w.r.t. the $l$-th coordinate direction if the internal multiplicities fulfill $1 \leq m(\xi_j^l) \leq m_l  \leq p_l, \ r_l \coloneqq p_l-m_l, \ \forall l \in 1, \dots , d$ in the multivariate case.    To  emphasize later the regularity of the splines, we introduce an upper index $r$ and write in the following $\widehat{B}_{{i},{p}}^{{r}}, \ \widehat{B}_{\p{i},\f{p}}^{\f{r}}$ respectively. Here  $\p{r} \coloneqq (r_1,\dots ,r_d) $.
	\item For univariate splines $\widehat{B}_{i,p}^r, \ p \geq 1, \ r \geq 0$ we have
	\begin{align}
	\label{eq:soline_der}
	\partial_{\zeta} \widehat{B}_{i,p}^r(\zeta) = \frac{p}{\xi_{i+p}-\xi_i}\widehat{B}_{i,p-1}^{r-1}(\zeta) +  \frac{p}{\xi_{i+p+1}-\xi_i}\widehat{B}_{i+1,p-1}^{r-1}(\zeta),
	\end{align} 
	with $\widehat{B}_{1,p-1}^{r-1}(\zeta)\coloneqq \widehat{B}_{n+1,p-1}^{r-1}(\zeta) \coloneqq 0$.
	\item  The support of the spline $\widehat{B}_{i,p}^r $ is part of the interval $[\xi_i,\xi_{i+p+1}]$.
\end{itemize}
The space spanned by all univariate splines $\widehat{B}_{i,p}^r$ corresponding to  given knot vector and degree $p$ and global regularity $r$  is denoted by $$S_p^r \coloneqq \textup{span}\{ \widehat{B}_{i,p}^r \ | \ i = 1,\dots , n \}.$$ 
 Later, to have more flexibility it could be useful to introduce a strictly positive weight function $W = \sum_{{i}} w_{{i}} \widehat{B}_{{i},{p}}^{{r}}  \in S^{r}_{p}$ and use  NURBS functions  $\widehat{N}_{{i},{p}}^{{r}} \coloneqq \frac{w_i \, \widehat{B}_{{i},{p}}^{{r}}}{W}$, the NURBS spaces ${N}_{p}^{r} \coloneqq \frac{1}{W}  S_{p}^{r}, $  respectively. Such  weighted spline  functions are needed for conic section parametrizations.
 For the multivariate case we just define the needed  spaces as  product spaces, e.g.  $$S_{p_1, \dots , p_d}^{r_1,\dots,r_d} \coloneqq S_{p_1}^{r_1} \otimes \dots \otimes S_{p_d}^{r_d} = \textup{span} \{\widehat{B}_{\p{i},\f{p}}^{\f{r}} \ | \  \p{i} \in \mathit{\mathbf{I}}  \},$$ provided proper univariate spaces.

To define discrete spaces in the computational domain  $\tilde{\o}$ we require a  parametrization  mapping $\mathbf{F} \colon \widehat{\Omega} \coloneqq (0,1)^d \rightarrow  \tilde{\o} \subset \mathbb{R}^d$ .
The knots stored in the knot vector $  \boldsymbol{\Xi} $, corresponding to  the underlying  splines, determine a mesh in the parametric domain $\widehat{\Omega} $, namely  $\widehat{M} \coloneqq \{ K_{\p{j}}\coloneqq (\psi_{j_1}^1,\psi_{j_1+1}^1 ) \times \dots \times (\psi_{j_{d}}^{d},\psi_{j_{d}+1}^{d} ) \ | \  \p{j}=(j_1,\dots,j_{d}), \ \textup{with} \ 1 \leq j_i <N_i\},$ and
with ${\boldsymbol{\Psi}}= \{\psi_1^1, \dots, \psi _{N_1}^1\}  \times \dots \times \{\psi_1^{d}, \dots, \psi _{N_{d}}^{d}\}$  \  \textup{as  the knot vector} \ ${\boldsymbol{\Xi}}$ \ 
\textup{without knot repetitions}.   
The image of this mesh under the mapping $\mathbf{F}$, i.e. $\mathcal{M} \coloneqq \{{\mathbf{F}}(K) \ | \ K \in \widehat{M} \}$, gives us a mesh structure in the physical domain. Obviously, by inserting knots without changing the parametrization  we can refine the mesh, which is known as  $h$-refinement; see \cite{Hughes2005,IGA1}.
For a mesh $\mathcal{M}$ we can define the global mesh size through $h \coloneqq \max\{h_{{K}} \ | \ {K} \in \widehat{M} \}$, where for ${K} \in \widehat{M}$ we denote with $h_{{K}} \coloneqq \textup{diam}({K}) $ the \emph{element size} and $\widehat{M}$ is the underlying parametric mesh.

The underlying concept of SB-IGA fits the fact that in CAD applications the computational domain is often described by means of its boundary. Furthermore, as discussed later the boundary representation is particularly suitable if trimming is considered.  \\  As long as the region of interest is star-convex we can choose a scaling center $\mathbf{z}_0 \in \mathbb{R}^d$ and the domain is then defined by a scaling of the boundary w.r.t. to $\mathbf{z}_0$. In this article, we restrict ourselves to planar SB domains, and in view of isogeometric analysis  we have some  boundary NURBS curve $\gamma(\zeta) = \textstyle\sum_{i=1}\mathbf{C}_i \ \widehat{N}_{i,p}^{r}(\zeta), \ \mathbf{C}_i \in \mathbb{R}^2 $ and define the SB-parametrization of some $\tilde{\o}$ through
\begin{equation*}
\mathbf{F} \colon \widehat{\o} \coloneqq (0,1)^2  \rightarrow \tilde{\o} \ , \ (\zeta,\xi) \mapsto \xi \ \big( \gamma(\zeta)-\mathbf{z}_0\big)+\mathbf{z}_0  \ \ (\textup{compare Fig. \ref{Fig:SB_illustration_1}}).
\end{equation*}

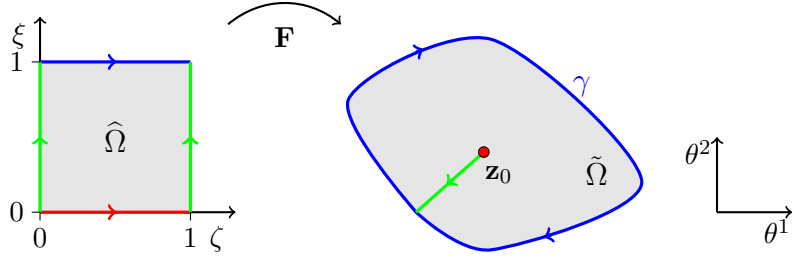
\begin{figure}
	\centering
	\begin{tikzpicture}
		\draw [fill=gray, opacity=0.2]  plot[smooth,dashed] coordinates {(5,0) (6,-0.5)  (8,0.4) (6,2.3) (4.1,1.5)  (5,0) };
	\draw[->, thick] (0,0) to (2.6,0);
	\draw[->, thick] (0,0) to (0,2.6);
	\draw[fill=gray, opacity=0.2] (0,0) rectangle (2,2);
	\draw[red, very thick] (0,0) --(2,0); 
	
	\draw[blue, very thick] (0,2) --(2,2);
	\draw[green, very thick] (0,0) --(0,2);  
	
	\draw[green, very thick] (2,0) --(2,2); 
	
		\draw [very thick, blue]  plot[smooth,dashed] coordinates {(5,0) (6,-0.5)  (8,0.4) (6,2.3) (4.1,1.5)  (5,0) };
			\draw[green, very thick] (5.9,0.8)--(5,0);
		\draw[fill=red] (5.9,0.8) circle(2pt);
		
	\draw[->, very thick, blue] (0.98,2) to (1.02,2); 
	\draw[->, very thick, red] (0.98,0) to (1.02,0); 
	
	\draw[->, very thick, green] (2,0.98) to (2,1.02); 
	
	\draw[->, very thick, green] (0,0.98) to (0,1.02); 
	
	\draw[->, very thick, green] (2,0.98) to (2,1.02); 
	
	\draw[->, very thick, green] (5.495,0.44) to (5.405,0.36); 
	
	\draw[->, very thick, blue] (5,1.98+0.15) to (5.1,2.02+0.15); 
	\draw[->, very thick, blue] (6.8,1.4-1.5-0.2) to (6.7,1.3-1.435-0.2); 
	
	\node[below] at (6.1,0.75) {$\mathbf{z}_0$};
	\node[below] at (2.35,-0.05) {$\zeta$};
	\node[left] at (-0.05,2.35) {$\xi$};
	
	\node[below] at (0,-0.08) {$0$};
		\node[left] at (-0.08,0) {$0$};
		\node[left] at (-0.08,2) {$1$};
		\node[below] at (2,-0.08) {$1$};
		\draw (-0.11,0) -- (0,0);
			\draw (0,-0.11) -- (0,0);
		\draw (-0.11,2) -- (0,2);
			\draw (2,-0.11) -- (2,0);
			\draw[->,thick, out=40,in=140] (2.5,2.5) to (4,2.5);
			
			\node[below] at (3.25,2.6) {$\mathbf{F}$};
			\node[above, blue, thick] at (7.2,1.4) {\large $\gamma$};
			
			\node at (1,1) {\large $\widehat{\Omega}$};
				\node at (7.4,0.55) {\large $\tilde{\Omega}$};
				
				\draw[->,thick] (9,0) --(10,0);
				\draw[->,thick] (9,0) --(9,1);
				\node at (9.8,-0.25) {$\theta^1$};
				\node at (8.75,0.8) {$\theta^2$};
	\end{tikzpicture}
	\caption{ \small Within SB-IGA a boundary description is used, where a well-chosen  scaling center is required. }
 \label{Fig:SB_illustration_1}
\end{figure}

\begin{remark}
In the considerations below it is allowed to replace the prefactor $\xi$ by a general degree $1$ polynomial $ c_1 \xi + c_2, \ c_1 >0, \ c_2\geq 0 $, i.e. no difficulties arise. This might be useful in some situations.
\end{remark}
 By the linearity w.r.t. the second parameter $\xi$  we can assume  for $\tilde{\o} \subset \mathbb{R}^2$ that  $\mathbf{F} \in \big[ {N}_p^r \otimes S_{p}^r  \big]^2$.  In particular, the weight function depends only on $\zeta$.

  \begin{figure}[H]	
  \centering
	\begin{tikzpicture}[scale=1.2]    
	    \node (eins) at (0.3,0.7) {\includegraphics[width=0.42\linewidth]{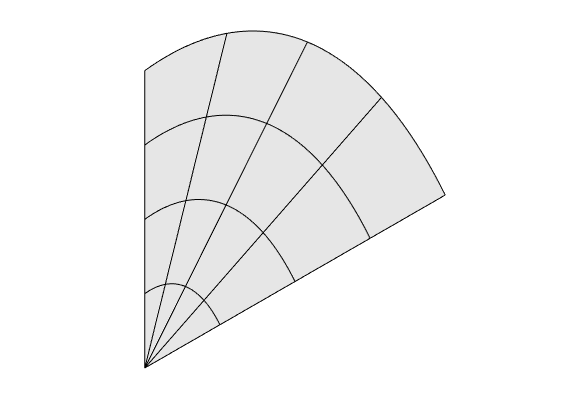}};
         \node (zwei) at (-0.45-0.3+6.6,0.7) {\includegraphics[width=0.42\linewidth]{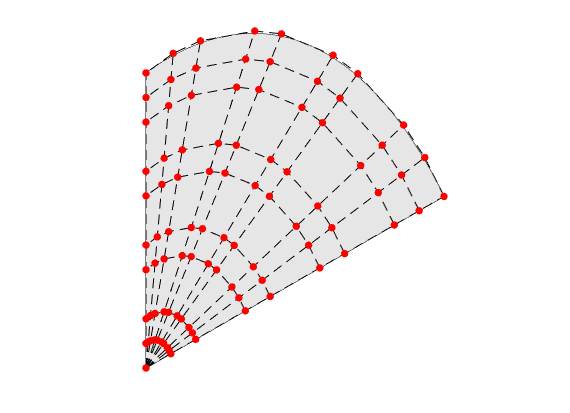}};
         \node[right] at (-1.8,-1.35) {\footnotesize (a) SB mesh};
               \node[right] at (-1.5+5.2,-1.35) {\footnotesize (b) Control points net};
          \draw[->,shift={(-1.017,-1)}] (0,0.1) -- (0,3.3);
          \draw[->,shift={(-0.9,-0.8)}] (-0.1,0) -- (2.4,0);
          \node at (-1.5,-0.8) {\small $(0,0)$};
            \node at (-1.22,2.3) {\small $\theta^2$};
             \node at (1.35,-0.95) {\small $\theta^1$};
             \node at (1.2,2) {\small $\tilde{\o}$};
	\end{tikzpicture}
\caption{\small The mesh and corresponding control net for a simple SB parametrization. Here we have $p=3, r=1$ for the underlying NURBS definition. }\label{Fig:Controls}
\end{figure}
  
  We suppose that there are  control points $\mathbf{C}_{i,j} \in  \mathbb{R}^2$ associated to the NURBS $(\zeta,\xi) \mapsto \widehat{N}_{i,p}^r(\zeta) \widehat{B}_{j,p}^r(\xi) \in N_p^r \otimes S_p^r$ which define $\mathbf{F}$, namely $$\mathbf{F}(\zeta,\xi) = \sum_{i=1}^{n_1} \sum_{j=1}^{n_2}  \mathbf{C}_{i,j} \  \widehat{N}_{i,p}^r(\zeta) \widehat{B}_{j,p}^r(\xi) . $$ For simplification, we assume equal degree and regularity w.r.t. each coordinate direction. Due to the SB ansatz, we obtain in the physical domain $\tilde{\o}$ layers of control points and it is $\mathbf{C}_{1,1}=\mathbf{C}_{2,1}= \dots = \mathbf{C}_{n_1,1}$; cf. Fig. \ref{Fig:Controls}.   The isogeometric spaces utilized for discretization methods  are  defined as the push-forwards of the NURBS, namely $$\mathcal{V}_h= \mathcal{V}_h(r,p) \coloneqq \{ \hat{\phi} \circ \mathbf{F}^{-1} \ | \ \hat{\phi}  \in N_{p}^r \otimes {S}_p^r  \}.$$  In particular, we suppose the existence of an inverse mapping on the interior of $\tilde{\o}$. If the domain boundary $\partial\o$ is composed of different curves $\gamma^{(k)}$, one defines  parametrizations for each curve as written above and we get a multi-patch geometry; see Fig. \ref{Fig:SB_param_illustration_2}. To be more precise, for a $n$-patch geometry we have 	
\begin{align}
\bigcup_{m=1,\dots,n} \overline{\tilde{\o}_{m}} = \overline{\tilde{\Omega}}, \ \ \ \tilde{\o}_k \cap \tilde{\o}_l = \emptyset  \ \textup{if} \ k \neq l, \  \ \ \mathbf{F}_m \colon  \widehat{\Omega} \rightarrow \tilde{\o}_m, \ \mathbf{F}_m  \in \big[N_{p}^r \otimes {S}_p^r \big]^2 \  
\end{align} and $\mathbf{F}_m$ are SB parametrizations. IGA spaces in the multi-patch framework are straight-forwardly defined as $$\mathcal{V}^M_{h} \coloneqq \{ \phi  \colon \tilde{\o} \rightarrow \mathbb{R}\ | \phi_{|\tilde{\o}_m} \in \mathcal{V}_h^{(m)}, \ \forall m \},$$  where $\mathcal{V}_h^{(m)}$ denotes the IGA space  corresponding to the $m$-th patch, to $\mathbf{F}_m$, respectively.  If the boundary curves do not meet  $G^1$ but high global regularity is necessary, then the coupling is an issue. For all the patch coupling considerations we suppose the next assumption.
\begin{assumption}[Regular patch coupling] \color{white}{sdakjhjkh} 
\label{Assumption:coupling}\color{black}
    \begin{itemize}
        \item In each patch we use the NURBS and B-splines with the same degree $p$ and regularity $r>0$. Furthermore, the $\p{F}_m$ are $C^1$ in the interior of each patch with $C^1$-regular inverse.
        \item The control points at interfaces match, i.e. the control points of the meeting patches  coincide along the interface.
    \end{itemize}
\end{assumption}
Thus, it is justified to write for the set of parametric basis functions in the $m$-patch $$\{ \widehat{N}_{i,p}^r \cdot \widehat{B}_{j,p}^r \ | \ 1 \leq i \leq n_1^{(m)}, \ 1 \leq j \leq n_2\},$$ for proper $n_1^{(m)}, \ n_2 \in \mathbb{N}_{>1}.$\\
Below we look at the $C^1$ coupling of such SB-IGA patches, i.e. face spaces of the form
\begin{equation}
\mathcal{V}_h^{M,1} \subset \mathcal{V}_h^{M} \cap C^1(\tilde{\o}),
\end{equation} 
where the singularity of the $\mathbf{F}_m$ at $\mathbf{z}_0$  requires a special attention.

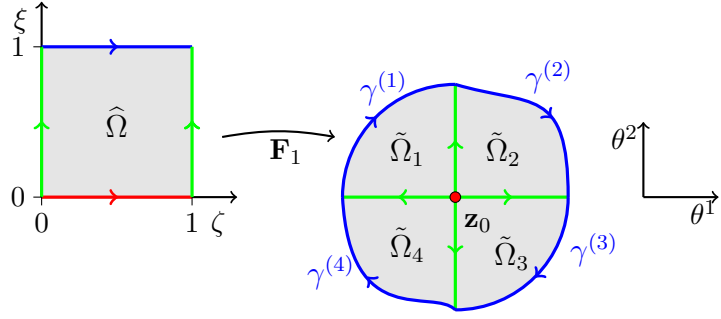
\begin{figure}
	\centering
	\begin{tikzpicture}
	\filldraw[fill opacity=0.2,fill=gray] (-1.5,0) -- (-1.5,1.5)  .. controls (-0.5,1.2) and (0,1.5) .. (0,0) ;

\filldraw[fill opacity=0.2,fill=gray] (-1.5,0) -- (-1.5,1.5) arc (90:180:1.5) -- cycle;

\filldraw[fill opacity=0.2,fill=gray] (-1.5,0) -- (-1.5,-1.5) .. controls (-1.8,-1.2) and (-2.9,-1.8) .. (-3,0) ;

\filldraw[fill opacity=0.2,fill=gray] (-1.5,0) -- (-1.5,-1.5) arc (270:360:1.5) -- cycle;

	\draw[very thick, green] (-1.5,0) -- (-3,0);
	\draw[very thick, green] (-1.5,0) -- (-1.5,1.5); 
	\draw[very thick, green] (-1.5,0) -- (-1.5,-1.5); 
	\draw[very thick, green] (-1.5,0) -- (0,0); 
	
	\draw[very thick, green,<-] (-0.74,0) -- (-0.76,0);
	\draw[very thick, green,->] (-1.5,0.74) -- (-1.5,0.76); 
	\draw[very thick, green,->] (-1.5,-0.74) -- (-1.5,-0.76); 
	\draw[very thick, green, <-] (-0.76-1.5,0) -- (-0.74-1.5,0);

	\draw[very thick, blue] (-1.5,1.5)  .. controls (-0.5,1.2) and (0,1.5) .. (0,0) ;
	\draw[very thick, blue] (-1.5,1.5) arc (90:180:1.5) ;
	\draw[very thick, blue] (-1.5,-1.5) .. controls (-1.8,-1.2) and (-2.9,-1.8) .. (-3,0) ;
	\draw[very thick, blue] (-1.5,-1.5) arc (270:360:1.5) ;
 
	\draw[fill=red] (-1.5,0) circle (2pt);

  	\draw[->, thick] (0-7,0) to (2.6-7,0);
  \draw[->, thick] (0-7,0) to (0-7,2.6);
  	\draw[fill=gray, opacity=0.2] (0-7,0) rectangle (2-7,2);
  \draw[red, very thick] (0-7,0) --(2-7,0); 
  
  \draw[blue, very thick] (0-7,2) --(-7+2,2);
  \draw[green, very thick] (0-7,0) --(0-7,2);  
  
  \draw[green, very thick] (2-7,0) --(2-7,2);

  	\node[below] at (6.1-7.3,-0.1) {$\mathbf{z}_0$};
  \node[below] at (2.35-7,-0.05) {$\zeta$};
  \node[left] at (-0.05-7,2.35) {$\xi$};
  
  \node[below] at (0-7,-0.08) {$0$};
  \node[left] at (-0.08-7,0) {$0$};
  \node[left] at (-0.08-7,2) {$1$};
  \node[below] at (2-7,-0.08) {$1$};
  \draw (-0.11-7,0) -- (0-7,0);
  \draw (0-7,-0.11) -- (0-7,0);
  \draw (-0.11-7,2) -- (0-7,2);
  \draw (2-7,-0.11) -- (2-7,0);

  \draw[->, very thick, blue] (0.98-7,2) to (1.02-7,2); 
  \draw[->, very thick, red] (0.98-7,0) to (1.02-7,0); 
  
  \draw[->, very thick, green] (2-7,0.98) to (2-7,1.02); 
  
  \draw[->, very thick, green] (0-7,0.98) to (0-7,1.02); 
  
  \draw[->, very thick, green] (2-7,0.98) to (2-7,1.02); 
  	\node at (1-7,1) {\large $\widehat{\Omega}$};
  	
  		\node at (-1.5-0.85*0.75,0.85*0.75) {\large $\tilde{\Omega}_1$};

	\node at (-1.5+0.85*0.75,0.85*0.75) {\large $\tilde{\Omega}_2$};
	
		\node at (-1.5+1*0.75,-1*0.75) {\large $\tilde{\Omega}_3$};
		
		\node at (-1.5-0.85*0.75,-0.85*0.75) {\large $\tilde{\Omega}_4$};

		\draw[->, very thick, blue] (-2.25-0.31-0.01,0.75+0.31-0.01) to (-2.25-0.31+0.01,0.75+0.31+0.01);

			\draw[->, very thick, blue,shift={(0.2,0)}] (-2.25-0.31-0.01+2.13,0.75+0.31+0.01) to (-2.25-0.31+0.01+2.13,0.75+0.31-0.01);

			\draw[->, very thick, blue,shift={(-0.1,0)}] (-2.25-0.31+0.01,0.75+0.31-0.01-2.13) to (-2.25-0.31-0.01,0.75+0.31+0.01-2.13); 
			
				\draw[->, very thick, blue] (-2.25-0.31+0.01+2.13,0.75+0.31+0.01-2.12) to (-2.25-0.31-0.01+2.13,0.75+0.31-0.01-2.12); 
				
	\node[left, blue] at (-2,1.45) {\large $\gamma^{(1)}$};
	\node[left, blue] at (0.2,1.6) {\large $\gamma^{(2)}$};
	\node[left, blue] at (0.8,-0.7) {\large $\gamma^{(3)}$};
	\node[left, blue] at (-2.7,-1) {\large $\gamma^{(4)}$};
	
		\draw[->,thick, out=10,in=170] (2.5-7.1,2.5-1.7) to (4-7.1,2.5-1.7);
		
		\node[below] at (3.25-7,2.6-1.7) {$\mathbf{F}_1$};

  \draw[->,thick,shift={(-8,0)}] (9,0) --(10,0);
 \draw[->,thick,shift={(-8,0)}] (9,0) --(9,1);
\node[shift={(-8,0)}] at (9.8,-0.2) {$\theta^1$};
\node[shift={(-8,0)}] at (8.75,0.8) {$\theta^2$};
	\end{tikzpicture}
	\caption{\small The boundary can be determined by the concatenation of several curves. In this situation the patch-wise defined discrete function spaces have to be coupled in order to obtain the desired global smoothness.}
 \label{Fig:SB_param_illustration_2}
\end{figure}

\section{From planar SB-IGA to KL shells}\label{Section:PlanarC1}
The appearance of second derivatives in the variational formulation \eqref{eq:variational_form_0} for the Kirchhoff-Love shell model requires a proper choice of the test and ansatz spaces for the computation of the shell displacements. An important aspect is the representation of the shell  via a middle surface which reduces the shell formulation to a $2D$   problem w.r.t. the parametric coordinates. We exploit this fact  by using  $C^1$-regular mappings in the parametric domain to express the shell configuration.

 \begin{figure}[H]	
  \centering
	\begin{tikzpicture}[scale=0.9]  
    \node (eins) at (0.3,-3.92) {\includegraphics[width=0.36\linewidth]{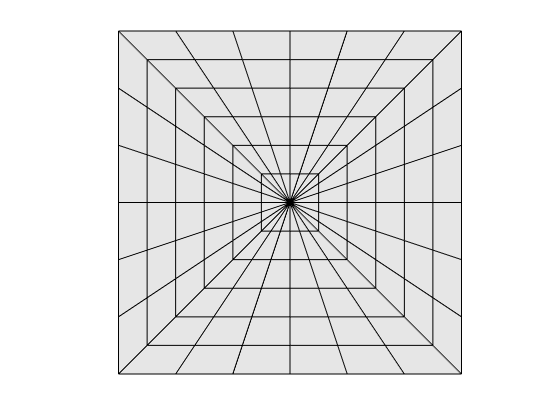}};
         \node (zwei) at (-0.45-0.3+7,-3.9) {\includegraphics[width=0.42\linewidth]{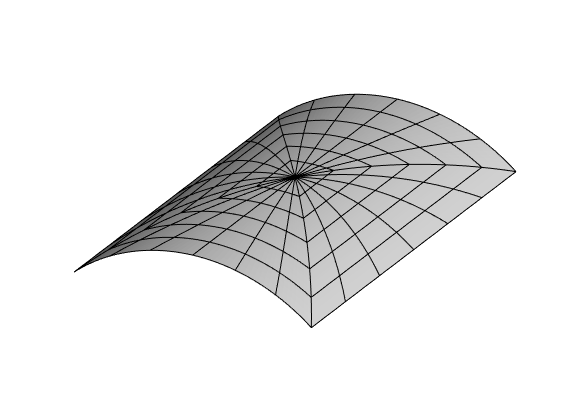}};
        \draw[->,thick,out=30,in=150,shift={(3,-3)}] (0,0) to (1,0);
		\node[below,shift={(2.7,-2.5)}] at (0.5,-0) {$\mathbf{R}$};
  \draw[->,thick,shift={(-1.45,-1.7)}] (0,-4) -- (4,-4);
   \draw[->,thick,shift={(-1.45,-1.7)}] (0,-4) -- (0,0);
     \node at (-0.45-0.3+3,-5.98) {$\theta^1$};
      \node at (-0.45-0.36-0.95,-1.9) {$\theta^2$};
       \node at (-0.45-0.3+9,-2.3) {$\o$};
    \node at (-0.45-0.3+2,-1.7) {$\tilde{\o}$};    
    	\draw[fill=gray, opacity=0.2,shift={(1,-4.4)}] (0-7,0) rectangle (2-7,2);
     	\draw[->, thick,shift={(1,-4.4)}] (0-7,0) to (2.3-7,0);
  \draw[->, thick,shift={(1,-4.4)}] (0-7,0) to (0-7,2.3);
  \draw[->,thick,out=30,in=150,shift={(-3,-3)}] (0,0) to (1,0);
  \node[below,shift={(-2.7,-2.7)}] at (0.5,-0) {$\mathbf{F}_m$};
  \node at (-4.9,-3.3) {$\widehat{\o}$};
    \node at (-3.8,-4.7) {$\zeta$};
    \node at (-6.2,-2.4) {$\xi$};
	\end{tikzpicture}
\caption{\small Our shell middle surface is parametrized by some mapping $\mathbf{R}$. We assume in next sections that the domain  $\tilde{\o}$ of this $\mathbf{R}$ is given as a SB multi-patch domain. }\label{Fig:shell_expl}
\end{figure}
To be more precise, the basic idea for the KL-shell within SB-IGA framework is as follows. We express the initial shell configuration as well as the deformation of the shell utilizing the coupled test functions defined on the parametric domain $\tilde{\o}$, where we suppose  $\overline{\tilde{\o}}= \overline{\cup_m \tilde{\o}_m}$ to be given as a multi-patch SB-IGA geometry. On the patches $\tilde{\o}_m$ in turn we can introduce basis functions by means of the standard push-forwards. 
 This means our shell middle-surface can be  described by $$ \mathbf{R} \colon \tilde{\o} \rightarrow \o,$$
with \begin{equation}
\label{eq:Shell_param_form}
    \mathbf{R}^m = \sum_{i,j} \mathbf{P}^{(m)}_{i,j}  \big[ \widehat{N}_{i,p}^r \cdot \widehat{B}_{j,p}^r  \big] \circ {\mathbf{F}}_m^{-1}, \ \ \mathbf{R}^m=\mathbf{R}_{|\tilde{\o}_m}
\end{equation} for suitable control points $\mathbf{P}^{(m)}_{i,j} \in \mathbb{R}^3$; see Fig. \ref{Fig:shell_expl}. The advantage of such an approach is clear, namely coupling in the planar domain $\tilde{\o}$ is easier than a strong $C^1$ coupling of NURBS surface patches in $3D$. And the restriction to the special class of SB domains $\tilde{\o}$ gets useful if trimmed shells are considered.  
\begin{assumption}
The initial shell configuration is $C^1$ smooth in a strong sense. Further, we assume that the shell deformations can be expressed with $C^1$ mappings. In particular, we do not consider shells with kinks or deformations that lead to non-smooth shell configurations.
\end{assumption} 
Summarising it is enough to introduce globally $C^1$ basis functions on $\tilde{\o}$ to obtain test and ansatz functions for the Kirchhoff-Love shell model.\\
The coupling of  scalar SB-IGA spaces in planar domains is part of the next section. A component-wise generalization  to compute  shell displacements  is clear and will not be addressed separately.

\subsection{Planar SB-IGA with $C^1$ coupling}

\begin{figure}
	\centering
	\begin{tikzpicture}[scale = 0.8]
	\draw[very thick,blue, fill = gray , opacity = 0.2]  (0,0) .. controls(1,2) and (2,0) .. (3,0) to (1.5,-2.5) to (0,0);
	\draw[blue,thick]  (0,0) .. controls(1,2) and (2,0) .. (3,0) ;
	\draw[->, very thick, blue] (1.4+0.18,0.5+0.185+0.015) -- (1.41+0.18,0.49+0.187+0.015);
	\draw[thick,green] (0,0) to (1.5,-2.5);
	\draw[thick,green,<-] (0.75,-1.25) to (0.75+0.15,-1.25-0.25);
	\draw[thick,brown] (3,0) to (1.5,-2.5);
	\node[above, blue] at (1,0.85) {$\gamma^{(1)}$};  
	\node[above, blue] at (4.3,-1.6) {$\gamma^{(2)}$};

	\node at (1.5,-1.2) {\footnotesize $\tilde{\o}_1$};
 \node at (1.5,-0.6) {\footnotesize $\p{n}_{\Gamma}$};
 \node[brown] at (2.93,-0.55) {\footnotesize $\Gamma$};
	\node[right] at (3,0.1) {\footnotesize $\gamma^{(1)}(1)= \gamma^{(2)}(0)$};
	\node[below] at (1.5,-2.5) {\footnotesize $\mathbf{z}_0$};

	\draw[blue, fill = gray , opacity = 0.2]  (3,0) -- (4,-2) -- (1.5,-2.5) -- (3,0) ;
	\draw[blue,thick]  (3,0) -- (4,-2) ;
	\draw[blue,thick,->]  (3.5,-1) -- (3.5+0.1,-1-0.2) ;
	\draw[thick, green] (4,-2) -- (1.5,-2.5);
	\draw[thick, green,<-] (2.75+0.25,-2.25+0.05) -- (2.75,-2.25);
	
	\draw[thick, brown]  (1.5,-2.5) -- (3,0);
    \draw[very thick, ->] (2.5,-2.5/3) -- (2.5-5/4,-2.5/3+3/4); 
	\draw[thick, brown,->]  (2.25-0.15,-1.25-0.25) -- (2.25,-1.25);
	\node at (2.8,-1.6) {\footnotesize $\tilde{\o}_2$};

	\draw[fill = red] (1.5,-2.5) circle(2pt); 
	\draw[fill = black] (3,0) circle(2pt); 
   \draw[->,thick,shift={(-3.3,-2)}] (9,0) --(10,0);
 \draw[->,thick,shift={(-3.3,-2)}] (9,0) --(9,1);
\node[shift={(-2.55,-1.55)}] at (9.8,-0.37) {$\theta^1$};
\node[shift={(-2.6,-1.55)}] at (8.6,0.8) {$\theta^2$};
	\end{tikzpicture}
	\caption{ \small A simple two-patch SB geometry.}
	\label{Fig:2}
\end{figure}
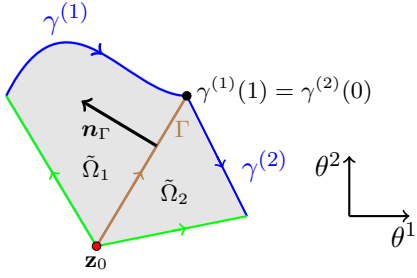

We explain the procedure of how to get $C^1$ basis functions in the two-patch situation as shown in Fig.  \ref{Fig:2} since a generalization to more patches is straightforward. According to the notation from the previous section and in view of isogeometric analysis we define the uncoupled basis functions on $ \overline{\tilde{\o}}= \overline{\tilde{\o}_1 \cup \tilde{\o}_2}$
$$ \mathcal{B} \coloneqq  \bigcup_m \{ \phi_{ij}^{(m)}  \ | \  i=1, \dots, n_1^{(m)}, \ j=1,\dots ,n_2\}, \ \ \textup{with} \  \phi_{ij}^{(m)} \circ \mathbf{F}_m = \widehat{N}_{i,p}^r \cdot \widehat{B}_{j,p}^r.$$ 
Above we extend the basis functions to the whole $\tilde{\o}$ by setting them to zero on the remaining patches. To obtain the necessary global $C^1$ smoothness we change the basis $\mathcal{B}$ according to the subsequent steps.
\subsubsection*{Remove basis functions near the scaling center}
 First, we remove in each patch all parametric and thus physical basis functions which are associated to $ \widehat{N}_{i,p}^r \cdot \widehat{B}_{j,p}^r$ with $j<3$.  This means we consider the modified basis $$\mathcal{B}^{'} \coloneqq  \bigcup_m \{ \phi_{ij}^{(m)}  \ | \  i=1, \dots, n_1^{(m)}, \ j=3,\dots ,n_2\}.$$
As a consequence, we see directly that the pull-backs of the remaining  basis functions are elements of $C^0(\overline{\widehat{\Omega}})$, i.e. we have a well-defined value $0$ in the singular point $\mathbf{z}_0$. Further, it can be easily verified that the pullbacks onto the parametric domain define  mappings 
$C^1(\overline{\widehat{\Omega}})$. Indeed, an application of the chain rule 
implies vanishing derivatives of the physical basis functions in the scaling center. For this purpose assume w.l.o.g. $\mathbf{z}_0 = \f{0}, \ \mathbf{F}(\zeta, \xi) =    \xi   \ \gamma(\zeta), \ \gamma = (\gamma_1,\gamma_2)^T$ and let $$\hat{\phi} \in \textup{span}\{ \widehat{N}_{i,p}^r \cdot \widehat{B}_{j,p}^r  \ | \ j\geq 3, r \geq 1\},$$ i.e. we have  a $C^1$ function with $\hat{\phi}= \partial_{\xi}\hat{\phi}= \partial_{\zeta}\hat{\phi} =0$ for $\xi=0$. \\
Set  $\phi(\theta^1,\theta^2) \coloneqq \hat{\phi} \circ \mathbf{F}^{-1}(\theta^1,\theta^2)$  then, with $(\theta^1,\theta^2)= \mathbf{F}(\zeta,\xi)$ it is 
\begin{align}
\label{eq:derivative_limit_1}
    \begin{bmatrix}
{\partial}_{\zeta} \hat{\phi}(\zeta,\xi) \\
\partial_{\xi}  \hat{\phi}(\zeta,\xi)
    \end{bmatrix} = \begin{bmatrix}
        \xi {\partial}_{\zeta}\gamma_1(\zeta)  & \xi {\partial}_{\zeta}\gamma_2(\zeta) \\
         \gamma_1(\zeta)  & \gamma_2(\zeta)
    \end{bmatrix} \begin{bmatrix}
        \partial_{\theta^1} \phi(\theta^1,\theta^2) \\
        \partial_{\theta^2} \phi(\theta^1,\theta^2)
    \end{bmatrix}.
\end{align}
 Below we use the abbreviation $$d(\zeta) \coloneqq \big({\partial}_{\zeta}\gamma_1(\zeta) \  \gamma_2(\zeta) -  {\partial}_{\zeta}\gamma_2(\zeta)\gamma_1(\zeta)\big),$$
and we have $d(\zeta) \neq 0$ due to the assumed invertibility of $\mathbf{F}$.
Hence,  it is
\begin{align}
\label{eq:derivativ limit_2}
   \begin{bmatrix}
        \partial_{\theta^1} \phi (\theta^1,\theta^2)\\
        \partial_{\theta^2} \phi(\theta^1,\theta^2)
   \end{bmatrix}  =  \frac{1}{\xi \ d(\zeta)}  \begin{bmatrix}
        \gamma_2(\zeta)  & -\xi {\partial}_{\zeta}\gamma_2(\zeta) \\
         -\gamma_1(\zeta)  & \xi {\partial}_{\zeta}\gamma_1(\zeta) 
    \end{bmatrix} 
     \begin{bmatrix}
{\partial}_{\zeta}  \hat{\phi} (\zeta,\xi)\\
\partial_{\xi}  \hat{\phi}(\zeta,\xi)
    \end{bmatrix}.
\end{align}
By linearity and the definition of the B-spline basis functions and the assumption \\${\hat{\phi} = \partial_{\xi}\hat{\phi}= \partial_{\zeta}\hat{\phi} =0, \ \textup{for} \ \xi=0 }$, we can w.l.o.g. suppose $\hat{\phi} (\zeta,\xi) = \widehat{N}(\zeta) \ \xi^2,$
for a suitable $\widehat{N}$. Note $ \widehat{B}_{j,p}^r(\xi) \in \mathcal{O}(\xi^2)$ for $\xi \rightarrow 0$ if $j\geq 3$.
For this case we study  the derivatives when  $\xi \rightarrow 0$. With \eqref{eq:derivativ limit_2} one sees
\begin{align}
\partial_{\theta^1} \phi(\theta^1,\theta^2) & = \frac{{\partial}_{\zeta} \widehat{N}(\zeta) \  \xi^2}{\xi \ d(\zeta)} \ \gamma_2(\zeta)- 2\frac{\xi  \ \widehat{N}(\zeta) \ \xi}{\xi \ d(\zeta)} \ {\partial}_{\zeta}\gamma_2(\zeta) \overset{ \xi \rightarrow 0}{\longrightarrow} \ 0 .
\end{align}
But this implies directly that $\phi(\theta^1,\theta^2)$ has a well-defined $\theta^1$-derivative in the scaling center, namely $\partial_{\theta^1} \phi = 0$ in $\mathbf{z}_0$. Analogously one gets the well-defined derivative $\partial_{\theta^2} \phi(\mathbf{z}_0) = 0$.\\
With this first basis modification step we avoid the problematic part near the scaling center.

\subsubsection*{Add scaling center basis functions}
 To preserve the approximation ability of SB-IGA test functions, we certainly have to introduce new test functions in the physical domain that determine the function value and derivatives at the scaling center. In the planar case, three additional test functions are sufficient, where we exploit the iso-parametric paradigm to define preliminary test functions $\phi_{i,sc} \in C^0(\o)$ with
\begin{align*}
\phi_{1,sc}(\mathbf{z}_0) = 1, \ \  \partial_{\theta^1}\phi_{1,sc}(\mathbf{z}_0) =  \partial_{\theta^2}\phi_{1,sc}(\mathbf{z}_0)= 0, \\
\phi_{2,sc}(\mathbf{z}_0) = 0, \ \  \partial_{\theta^1}\phi_{2,sc}(\mathbf{z}_0) = 1, \ \ \partial_{\theta^2}\phi_{2,sc}(\mathbf{z}_0)= 0, \\
\phi_{3,sc}(\mathbf{z}_0) = 0, \ \  \partial_{\theta^1}\phi_{3,sc}(\mathbf{z}_0) = 0, \ \ \partial_{\theta^2}\phi_{3,sc}(\mathbf{z}_0)= 1.
\end{align*}
Latter requirements can be easily satisfied if we use the entries of the geometry control points  as coefficients for the parametric pendants $\hat{\phi}_{i,sc}$. To be more precise, if we have
$\mathbf{F}_m = \sum_{j=1}^{n_2}  \sum_{i=1}^{n_1^{(m)}} \mathbf{C}_{i,j}^{(m)} \widehat{N}_{i,p}^r  \cdot \widehat{B}_{j,p}^r,$ then we set 
\begin{align}
\label{eq_sc_test_fun_param_1}
\hat{\phi}_{1,sc}^{(m)} \coloneqq  \sum_{j=1}^{p+1}\sum_{i=1}^{n_1^{(m)}}   \widehat{N}_{i,p}^r  \cdot \widehat{B}_{j,p}^r,  \\
\hat{\phi}_{2,sc}^{(m)} \coloneqq  \sum_{j=1}^{p+1}\sum_{i=1}^{n_1^{(m)}}  (\mathbf{C}_{i,j}^{(m)})_1 \  \widehat{N}_{i,p}^r  \cdot \widehat{B}_{j,p}^r, \\
\hat{\phi}_{3,sc}^{(m)} \coloneqq  \sum_{j=1}^{p+1}\sum_{i=1}^{n_1^{(m)}}  (\mathbf{C}_{i,j}^{(m)})_2  \ \widehat{N}_{i,p}^r  \cdot \widehat{B}_{j,p}^r.  \label{eq_sc_test_fun_param_3}
\end{align}
And then the $\phi_{i,sc}$ are determined  on $\tilde{\o}$ via $$(\phi_{i,sc})_{| \tilde{\o}_m }\circ \mathbf{F}_m= \hat{\phi}_{i,sc}^{(m)} .$$
By the properties of B-splines we see directly     $ {\phi}_{1,sc}^{(m)}=1, {\phi}_{2,sc}^{(m)}=\theta^1, \ {\phi}_{3,sc}^{(m)}=\theta^2 $ in a neighborhood of $\mathbf{z}_0$. In other words, we can choose the latter three functions for the determination of values and derivatives at $\mathbf{z}_0$, i.e. we add them to the set of basis functions used for the coupling step. Hence now we have the new set of basis functions 
$$\mathcal{B}^{''} \coloneqq \mathcal{B}^{'} \cup \{ \phi_{1,sc}, \ \phi_{2,sc}, \ \phi_{3,sc}\}.$$
The three new test functions are continuous since we assume that the patches match continuously. And we note that the $\phi_{i,sc}$ are only defined once for a scaling center.
For example, in Fig. \ref{Fig:scaling_center_funcs},  the scaling center test functions for a three-patch case are shown.

  \begin{figure}[H]	
  \centering
	\begin{tikzpicture}[scale=0.85]  
    \node (eins) at (0.3,-4.1) {\includegraphics[width=0.42\linewidth]{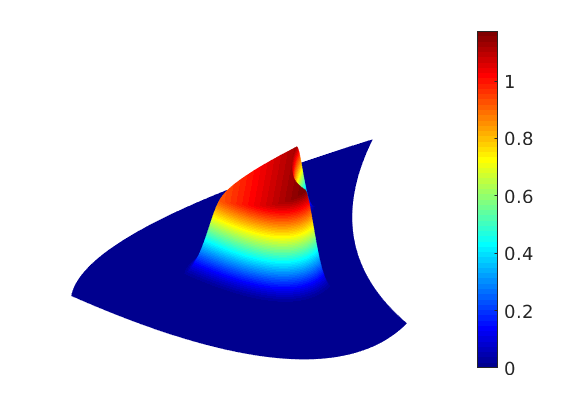}};
         \node (zwei) at (-0.45-0.3+8.6,-4.1) {\includegraphics[width=0.42\linewidth]{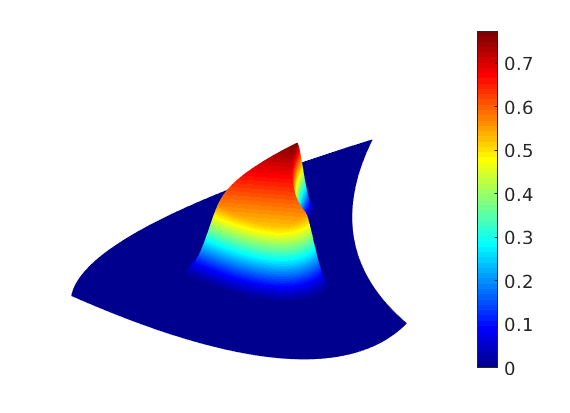}};
	    \node (eins) at (0.3,0.7) {\includegraphics[width=0.42\linewidth]{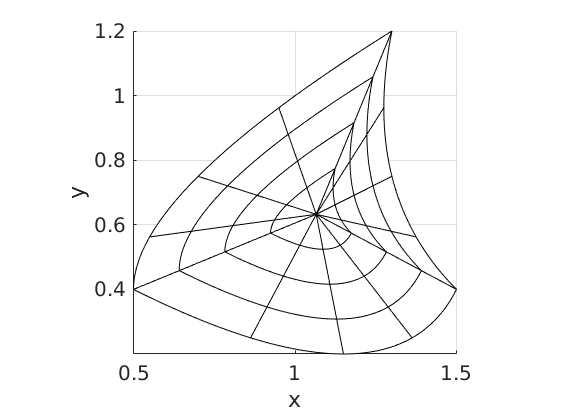}};         \node (zwei) at (-0.45-0.3+8.6,0.7) {\includegraphics[width=0.42\linewidth]{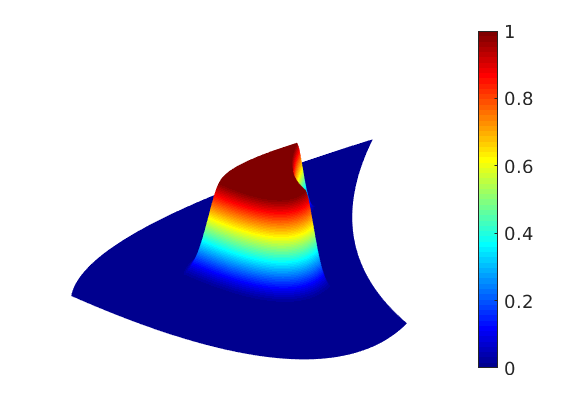}};
         \node at (-0.5,3.4) { \footnotesize (a) SB mesh};
        \node at (5.5,1.3) { \footnotesize (b) $\phi_{1,sc}$};
         \node at (-2,-3.5) { \footnotesize (c) $\phi_{2,sc}$};
          \node at (5.5,-3.5) { \footnotesize (d) $\phi_{3,sc}$};
          \filldraw[white] (0.45,-1.8) circle (4pt);
          \filldraw[white] (-2.4,0.9) circle (4pt);
          
	\end{tikzpicture}
\caption{ \small An illustration of the scaling center basis functions for a planar 3-patch example. Underlying (patch-wise) degree and regularity are $p=3, \ r=1$.}\label{Fig:scaling_center_funcs}
\end{figure}

\subsubsection*{The coupling step}
For the functions in $\mathcal{B}^{''}$, we now apply the needed coupling step to obtain the global $C^1$ regularity. This coupling is in turn composed of two simple substeps which correspond to the procedure used in \cite{Collin2016}.\\
Namely, we first couple the basis functions from $\mathcal{B}^{''}$  in a $C^0$ manner and observe that this is easily achieved due to the regular patch coupling Assumption \ref{Assumption:coupling}. So we have again a new basis $\mathcal{B}^{'''} = \mathcal{B}^{''} \cap C^0(\overline{\tilde{\o}})$. Looking at the interface $\Gamma$ between two patches, e.g. like in Fig. \ref{Fig:2},   we have for $\phi \in \mathcal{B}^{'''} $ a well-defined directional derivative in the direction of the interface. Thus it is enough to  find linear combinations $\phi$ of basis functions in $\mathcal{B}^{'''} $ that have continuous directional derivatives $\nabla \phi \cdot \p{n}_{\Gamma}$, where $\p{n}_{\Gamma}$ is a fixed normal vector to the interface.
Consequently, we get the $C^1$ coupled basis functions by the null space of the derivative jump matrix $M_{\Gamma}$, where 
$$(M_{\Gamma})_{kl} \coloneqq \int_{\Gamma} [[\nabla\phi_k \cdot \p{n}_{\Gamma}]] \,[[\nabla\phi_l \cdot \p{n}_{\Gamma}]] \, ds,  \ \phi_k, \phi_l \in \mathcal{B}^{'''} $$
with $[[g]]$ standing for the jump value of the $g$ across the interface and above we assume  $  \mathcal{B}^{'''} \coloneqq \{\phi_1, \ \phi_2,  \dots  \}$.
Straightforwardly this null space computation is adapted if several interfaces are involved.\\
Finally, we have with the basis of the null space of $M_{\Gamma}$ our $C^1$ smooth basis functions, i.e. our desired coupled basis $\mathcal{B}^1 \coloneqq  \mathcal{B}^{'''}  \cap C^1(\tilde{\o})$. We denote the space spanned by the coupled basis functions as $$\mathcal{V}_h^{M,1} = \textup{span}\{ \phi \ | \phi \in \mathcal{B}^{1}   \}.$$
\begin{remark}
The steps to get smooth basis functions above can be implemented directly. Even if the computation of the jump matrix for the different matrices seems problematic, we get well-defined integral values since the evaluations are   only done in quadrature points away from the singular point.
Besides, we note that all the steps from above  reduce to a suitable transformation matrix $T \in \mathbb{R}^{N \times N_1}$ expressing the coupled basis functions  w.r.t. the original basis. In particular $N_1 = \# \mathcal{B}^1, \ N = \# \mathcal{B}$. 
\end{remark}

After we showed the possibility to obtain $C^1$ mappings in the planar SB context, an obvious question arises.  Do  the coupled functions  yield a reasonable approximation behavior if we utilize them in numerical applications? It is well-known that a strong $C^1$ coupling might  lead to $C^1$ locking, i.e. the loss or worsening of approximation abilities. Admittedly, we do not give here a strict proof, of whether the $C^1$ coupling for planar SB domains   ensures optimal convergence  orders. However,  we want to outline why we expect the latter. Numerical tests confirm this conjecture. Following the results from \cite{Collin2016}, we get optimal convergence orders in  the planar multi-patch case, if we restrict ourselves to so-called analysis-suitable $G^1$ (AS-$G^1$)parametrizations  and B-spline basis functions with $p>r+1>1$. The mentioned parametrizations class is defined for the standard two-patch case, cf. Fig. \ref{Fig:1}, as follows.
\begin{definition}
In view of Fig. \ref{Fig:1} we set $\mathbf{F}^{(R)}=\mathbf{F}_2, \ \mathbf{F}^{(L)} = \mathbf{F}_1(\cdot +1,\cdot) $. The global parametrization $\tilde{\mathbf{F}}, $ i.e. $\tilde{\mathbf{F}}_{|\widehat{\o}^{(S)}}=\mathbf{F}^{(S)}, \ S \in \{L,R\}$, is analysis-suitable $G^1$ if there are polynomial functions $\alpha^{(S)}, \ \beta^{(S)} \colon [0,1] \rightarrow \mathbb{R}$ of degree at most $1$ s.t.
	\begin{align}
 \label{eq:ASG1_cond}
	\alpha^{(R)}(\xi) \, \partial_{\zeta}\mathbf{F}^{(L)}(0,\xi)- \alpha^{(L)}(\xi) 	\partial_{\zeta}\mathbf{F}^{(R)}(0,\xi)+ \beta(\xi) \, 	\partial_{\xi}\mathbf{F}^{(L)}(0,\xi) = \p{0},
	\end{align}
	with 
 $ \beta = \alpha^{(L)} \, \beta^{(R)} - \alpha^{(R)} \, \beta^{(L)}$. Further we require $\tilde{\mathbf{F}} \in C^0( \overline{\widehat{\Omega}^{(L)} \cup \widehat{\Omega}^{(R)}}; \mathbb{R}^2),  $ and that $ \tilde{\mathbf{F}}^{(S)} \in C^1(\overline{\widehat{\Omega}^{(S)}}; \mathbb{R}^2)$ is invertible.
\end{definition}
We call then the complete multi-patch parametrization analysis-suitable $G^1$, if the latter condition is fulfilled for each interface.
\begin{figure}[h!]
	\centering
		\begin{tikzpicture}[scale = 0.8]
		\draw[fill = gray , opacity = 0.2] (-5,-1.5) -- (-3,-1.5) -- (-3,0.5) -- (-5,0.5) -- (-5,-1.5);
		\draw[very thick,red] (-5,-1.5) -- (-3,-1.5); 
		\draw[very thick,green] (-3,-1.5) -- (-3,0.5); 
		
		\draw[very thick,blue] (-3,0.5) -- (-5,0.5);			
		\draw[->, out = 20, in = 150]  (-6,0.6) to (-0.5,0.8);
		\draw[->, out = -30, in = -180]  (-4,-1.6) to (1.3,-1.9);
		\node[above] at (-2,0.45) { ${\mathbf{F}}^{(L)}$};
		\node[above] at (-1.2,-2.15) { $\mathbf{F}^{(R)}$};
		\node at (-6,-0.8) { $\widehat{\Omega}^{(L)}$};
		\node at (-4,-0.8) { $\widehat{\Omega}^{(R)}$};
		\node at (1,0.25) { $\tilde{\o}^{(L)}$};
		\node at (3,-1.25) { $\tilde{\o}^{(R)}$};
		
		\draw[fill = gray , opacity = 0.2] (-7,-1.5) -- (-5,-1.5) -- (-5,0.5) -- (-7,0.5) -- (-7,-1.5);
		\draw[very thick,red] (-7,-1.5) -- (-5,-1.5); 
		\draw[very thick,green] (-7,0.5) -- (-7,-1.5); 
		\draw[very thick,blue] (-5,0.5) -- (-7,0.5);
		
		\draw[dashed,->] (-7,-1.5) to (-7,1);
		\draw[dashed,->] (-7,-1.5) to (-2,-1.5);
		\draw (-7,-1.5) -- (-7,-1.6);
		\draw (-5,-1.5) -- (-5,-1.6);
		\draw (-7,-1.5) -- (-7.1,-1.5);
		\draw (-7,0.5) -- (-7.1,0.5);
		\node[below] at (-7,-1.6) {  $-1$};
		\node[below] at (-5,-1.6) {  $0$};
		\node[left] at (-7.1,-1.5) {  $0$};
		\node[left] at (-7.1,0.5) {  $1$};
		\node[right] at (-7,0.8) {  $\xi$};
		\node[below] at (-2.3,-1.45) {  $\zeta$};
		\node[very thick,brown] at (2.5,0.5) { \small $\Gamma$};
		
		\draw[fill = gray, opacity = 0.2] (-0,1.5) -- (4,1.5) -- (2,-0.5) -- (0,-0.5)--(-0,1.5);
		\draw[fill = gray, opacity = 0.2] (2,-0.5) -- (4,1.5) -- (4,-2.5) -- (2,-2.5)--(2,-0.5);
		\draw[very thick,blue] (-0,1.5) -- (4,1.5);
		\draw[very thick,blue] (4,1.5) -- (4,-2.5);
		\draw[very thick,red] (-0,-0.5) -- (2,-0.5);
		\draw[very thick,red] (2,-0.5) -- (2,-2.5);
		\draw[very thick,green] (0,-0.5) -- (0,1.5);
		\draw[very thick,green] (2,-2.5) -- (4,-2.5);
		\draw[very thick,brown] (2,-0.5) -- (4,1.5);
		\draw[very thick,brown] (-5,0.5) -- (-5,-1.5);
    \draw[->,thick,shift={(-4.5+0.45,0)}] (9,0) --(10,0);
 \draw[->,thick,shift={(-4.5+0.45,0)}] (9,0) --(9,1);
\node[shift={(-3.65+0.45,-0.1)}] at (9.8,-0.2) {$\theta^1$};
\node[shift={(-3.6+0.3,0)}] at (8.75,0.8) {$\theta^2$};
		\end{tikzpicture}
	\caption{\small A standard planar two-patch geometry.}
		\label{Fig:1}
\end{figure}
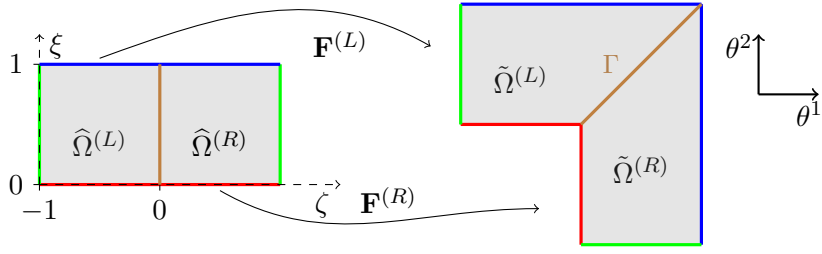
With the next lemma we relate the introduced AS-$G^1$ geometries to the planar SB ansatz.
\begin{lemma}[{SB-IGA patch coupling as quasi AS-$G^1$}]
       \label{Lemma:quasi_ASG1}
	The planar SB parametrizations  fulfill the condition in  \eqref{eq:ASG1_cond} for proper polynomials $\alpha^{(S)}, \ \beta^{(S)}$ of degree $\leq 1$.
\end{lemma}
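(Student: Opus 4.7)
The plan is to reduce the AS-$G^1$ condition to a simple vector polynomial identity in $\xi$ by exploiting the explicit form of SB parametrisations, and then to construct the four required polynomials by direct inspection.

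First I would fix coordinates by setting $\mathbf{z}_0 = \mathbf{0}$ without loss of generality and, following the reparametrisation $\mathbf{F}^{(L)} = \mathbf{F}_1(\cdot + 1, \cdot)$ from the definition, write each SB patch locally as $\mathbf{F}^{(S)}(\zeta, \xi) = \xi\,\gamma^{(S)}(\zeta)$ for $S \in \{L, R\}$, so that the common interface corresponds to $\zeta = 0$ in both coordinate systems. The control-point matching part of Assumption~\ref{Assumption:coupling} forces $\gamma^{(L)}(0) = \gamma^{(R)}(0) =: \mathbf{P}$. Introducing $\mathbf{T}^{(S)} \coloneqq \partial_\zeta \gamma^{(S)}(0)$, an elementary computation yields $\partial_\zeta \mathbf{F}^{(S)}(0, \xi) = \xi\,\mathbf{T}^{(S)}$ and $\partial_\xi \mathbf{F}^{(L)}(0, \xi) = \mathbf{P}$, so condition~\eqref{eq:ASG1_cond} collapses to the polynomial identity
\[
\xi\,\bigl[\alpha^{(R)}(\xi)\,\mathbf{T}^{(L)} - \alpha^{(L)}(\xi)\,\mathbf{T}^{(R)}\bigr] + \beta(\xi)\,\mathbf{P} = \mathbf{0}.
\]
The key structural observation is that the tangent contributions already carry an explicit factor of $\xi$, which will pin $\beta$ down to something proportional to $\xi$.

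I would then split into two cases according to whether the tangent vectors are linearly independent. In the generic case $\{\mathbf{T}^{(L)}, \mathbf{T}^{(R)}\}$ is a basis of $\mathbb{R}^2$ and I decompose $\mathbf{P} = p_L\,\mathbf{T}^{(L)} + p_R\,\mathbf{T}^{(R)}$. The invertibility of each $\mathbf{F}^{(S)}$ for $\xi > 0$ translates via $\det(\partial_\zeta \mathbf{F}^{(S)}, \partial_\xi \mathbf{F}^{(S)})(0, \xi) = \xi\,\det(\mathbf{T}^{(S)}, \mathbf{P})$ into $p_L, p_R \neq 0$, which is the crucial consequence of Assumption~\ref{Assumption:coupling}. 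Projecting onto the basis $\{\mathbf{T}^{(L)}, \mathbf{T}^{(R)}\}$ and choosing $\beta(\xi) = \xi$ gives $\alpha^{(R)} \equiv -p_L$ and $\alpha^{(L)} \equiv p_R$, and the required decomposition $\beta = \alpha^{(L)}\beta^{(R)} - \alpha^{(R)}\beta^{(L)}$ is then realised for example by $\beta^{(L)}(\xi) = \xi/(2 p_L)$, $\beta^{(R)}(\xi) = \xi/(2 p_R)$, all four auxiliary polynomials having degree at most one. The degenerate case $\mathbf{T}^{(L)} = k\,\mathbf{T}^{(R)}$ (in which invertibility forces $\mathbf{T}^{(R)}$ and $\mathbf{P}$ to be independent) is treated analogously by projecting onto $\{\mathbf{T}^{(R)}, \mathbf{P}\}$, which forces $\beta \equiv 0$ and $\alpha^{(L)} = k\,\alpha^{(R)}$; the trivial choice $\alpha^{(R)} \equiv 1$, $\alpha^{(L)} \equiv k$, $\beta^{(L)} = \beta^{(R)} \equiv 0$ does the job.

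The main obstacle I anticipate is not the construction of the $\alpha^{(S)}$ but the requirement that $\beta$ admit the \emph{algebraic} factorisation $\beta = \alpha^{(L)}\beta^{(R)} - \alpha^{(R)}\beta^{(L)}$ with four polynomials of degree at most one, which is a genuine quadratic constraint on $\beta$ that could a priori fail. What rescues the construction is exactly the SB-specific fact that $\partial_\zeta \mathbf{F}^{(S)}(0,\xi)$ vanishes linearly at the scaling centre, pinning $\beta$ to the simplest non-trivial shape $\beta(\xi) = c\,\xi$ and thereby reducing the factorisation to a one-dimensional linear problem that is solvable under the non-degeneracy $p_L, p_R \neq 0$ guaranteed by Assumption~\ref{Assumption:coupling}.
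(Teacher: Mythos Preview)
Your argument is correct. You compute the interface derivatives of the SB maps explicitly, reduce \eqref{eq:ASG1_cond} to the vector identity
\[
\xi\bigl[\alpha^{(R)}(\xi)\,\mathbf{T}^{(L)}-\alpha^{(L)}(\xi)\,\mathbf{T}^{(R)}\bigr]+\beta(\xi)\,\mathbf{P}=\mathbf{0},
\]
and then solve it by hand in both the generic and degenerate cases, using the non-vanishing of $\det(\mathbf{T}^{(S)},\mathbf{P})$ to guarantee $p_L,p_R\neq 0$ and hence the solvability of the factorisation $\beta=\alpha^{(L)}\beta^{(R)}-\alpha^{(R)}\beta^{(L)}$. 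All steps check out.

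The paper proceeds differently: instead of constructing the $\alpha^{(S)},\beta^{(S)}$ explicitly, it observes that at $\zeta=0$ the first-order data of $\mathbf{F}(\zeta,\xi)=\xi\gamma(\zeta)$ coincide with those of the bilinear map $\xi\gamma(0)+\xi\zeta\,\partial_\zeta\gamma(0)$, so that the condition \eqref{eq:ASG1_cond} for an SB two-patch is literally the same equation as for its bilinear approximation; the lemma then follows from the known fact (from \cite{Collin2016}) that bilinear two-patch parametrizations are AS-$G^1$. Your route is more elementary and self-contained---you never invoke the external result and you actually exhibit the coupling polynomials, which is potentially useful for implementation---while the paper's route is shorter and highlights the conceptual reason behind the lemma, namely that only first-order interface data enter \eqref{eq:ASG1_cond} and SB patches are bilinear to first order near any interface through the scaling center.
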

\begin{proof}
This can be shown by simple calculations. But it is also clear by the observation that the SB-parametrizations can be approximated in a neighbourhood of an interface   by  bilinear parametrizations.  This can can be exemplified if w.l.o.g. $\mathbf{F}(\zeta,\xi) = \xi \gamma(\zeta)$, where the interface corresponds to the points with $\zeta=0$. Then it is $$\mathbf{F}(\zeta,\xi) = \xi \gamma(0) + \xi \zeta \, \partial_{\zeta} \gamma(0)+ \mathcal{O}(\zeta^2). $$
But since bilinear parametrizations are AS-$G^1$ as shown in \cite{Collin2016}, the assumption is clear.  
\end{proof}
Hence, if  we are away from the singular point we get with the SB ansatz  AS-$G^1$ parametrizations. And for the approximation near the scaling center we added the scaling center basis functions $\phi_{i,sc}$. This and Theorem 1 in \cite{Collin2016} suggest the feasibility of SB-IGA for $C^1$ coupling.

\subsection{Generalization to non-star shaped  and  trimmed geometries}
In this section, we shortly explain why the coupling from above can be generalized to more complicated geometries. \\
Up to now, the computational domain is considered to be star-shaped.  However, assuming we can partition our domain into several star-shaped blocks, we can do the coupling as explained previously block-wise. The coupling between the different blocks can then be realized analogously and easily if the block interfaces are given by straight lines. Moreover,  two patches from two blocks that meet at a straight interface meet w.l.o.g.  as two bilinear parametrizations, see Fig. \ref{Fig:non-star-shaped-domain}. But as already mentioned, the bilinear parametrizations are AS-$G^1$ meaning we should not see $C^1$ locking if $p>r+1>1$. 
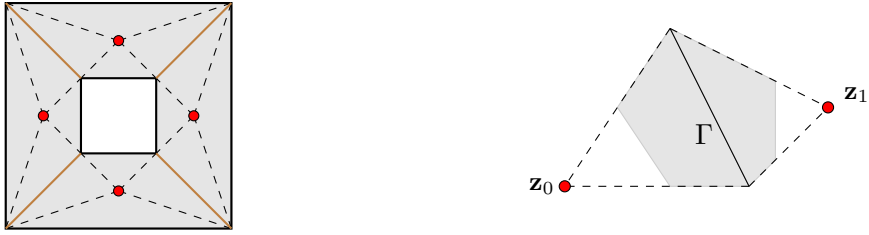
\begin{figure}
 \begin{minipage}{0.49\linewidth}
 \hspace{2cm}
    \begin{tikzpicture}[scale=1.0]
    \draw[black, thick, fill=gray, opacity = 0.2] (0,0) rectangle (3,3);
    \draw[black, thick, fill=white] (1,1) rectangle (2,2);
    \draw[black, thick] (0,0) rectangle (3,3);
    
    \draw[dashed] (0.5,1.5)--  (0,0);
	\draw[dashed] (0.5,1.5)--  (1,1);
	\draw[dashed] (0.5,1.5)--  (1,2);
	\draw[dashed] (0.5,1.5)--  (0,3);

    \draw[dashed] (1.5,2.5)--  (3,3);
	\draw[dashed] (1.5,2.5)--  (2,2);
	\draw[dashed] (1.5,2.5)--  (0,3);
	\draw[dashed] (1.5,2.5)--  (1,2);

    \draw[dashed] (2.5,1.5)--  (3,3);
	\draw[dashed] (2.5,1.5)--  (2,2);
	\draw[dashed] (2.5,1.5)--  (3,0);
	\draw[dashed] (2.5,1.5)--  (2,1);

    \draw[dashed] (1.5,0.5)--  (0,0);
	\draw[dashed] (1.5,0.5)--  (1,1);
	\draw[dashed] (1.5,0.5)--  (3,0);
	\draw[dashed] (1.5,0.5)--  (2,1);

    \draw[brown,thick] (0,0) to (1,1);
    \draw[brown,thick] (0,3) to (1,2);
    \draw[brown,thick] (3,3) to (2,2);
    \draw[brown,thick] (2,1) to (3,0);

    \draw[fill=red] (0.5,1.5) circle (1.9pt);
    \draw[fill=red] (1.5,2.5) circle (1.9pt);
    \draw[fill=red] (2.5,1.5) circle (1.9pt);
    \draw[fill=red] (1.5,0.5) circle (1.9pt);	
		\end{tikzpicture} 
 \end{minipage}
 \hspace{0cm}
 \begin{minipage}{0.49\linewidth}
		\centering
		\begin{tikzpicture}[scale=0.7]
		\draw (2,3) -- (3.5,0);
		\filldraw[fill=gray, opacity = 0.2] (2,3) --(3.5,0) -- (2,0) -- (1,1.5) -- (2,3);
		\filldraw[fill=gray, opacity = 0.2] (2,3) --(3.5,0) -- (4,0.5) -- (4,2) -- (2,3);
		\draw[dashed] (2,3) -- (0,0);
		\draw[dashed] (3.5,0) -- (0,0);
		\draw[dashed] (2,3) -- (5,1.5);
		\draw[dashed] (3.5,0) -- (5,1.5);
		\node at (2.65,1) {$\Gamma$};
		\node[left] at (0,0) { \small $\mathbf{z}_0$};
		\node[above] at (5.55,1.4) { \small  $\mathbf{z}_1$};
		\draw[fill = red] (0,0) circle(3pt); 
		\draw[fill = red] (5,1.5) circle(3pt); 
		\end{tikzpicture}
 \end{minipage}
		\caption{\small A non-star-shaped domain which is divided into star-shaped subdomains that have non-curved interfaces. Such multi-patch structures are still suitable for a $C^1$ coupling. If the interface between SB-param. with two different scaling centers is a straight line, then w.l.o.g. the patches meet as two bilinear patches and the param. is quasi AS-$G^1$, i.e. AS-$G^1$ except at the singular points.}
		\label{Fig:non-star-shaped-domain}
	\end{figure} 

The mentioned natural approach to applying a preliminary partition step to handle more complicated geometries is convenient when considering trimmed planar domains. The trimming operation, i.e. the cut away of domain parts is a standard operation within the IGA community and standing to reason for the description of different geometries. Especially, when dealing with perforated domains trimming curves are adequate to determine the computational domain. 
The concept of trimming can be formally  incorporated within planar SB-IGA. Namely, let us suppose an untrimmed domain $\tilde{\o}$ is defined as SB multi-patch domain. If a trimming curve $\gamma_T$ defines which parts are omitted, we first compute the intersections between the trimming curve and the boundary curves of the untrimmed domain. First, we think of a non-interior curve, i.e. there are at least two intersections.  After  possible knot insertions, it is possible to represent all relevant boundary segments by new NURBS curves exactly. If the trimmed domain is not-star shaped we first partition the new domain into star-convex blocks and then choose suitable scaling centers. Otherwise, we  choose directly a feasible scaling center.  \\
If there is an interior trimming curve, we   start with the partition step and then we detect the relevant boundary curves that are again inputs for the blocks-wise SB-parametrizations. Since the situation with an interior trimming curve is more interesting for us and the only one appearing in the numerics section later, we illustrate the proceeding for the latter case in the subsequent Fig. \ref{Fig:Trimming_3}.
\begin{figure}[H]
\centering
\begin{minipage}{0.32\textwidth}
	\begin{figure}[H]
		\centering
		\includegraphics[width=0.8\textwidth]{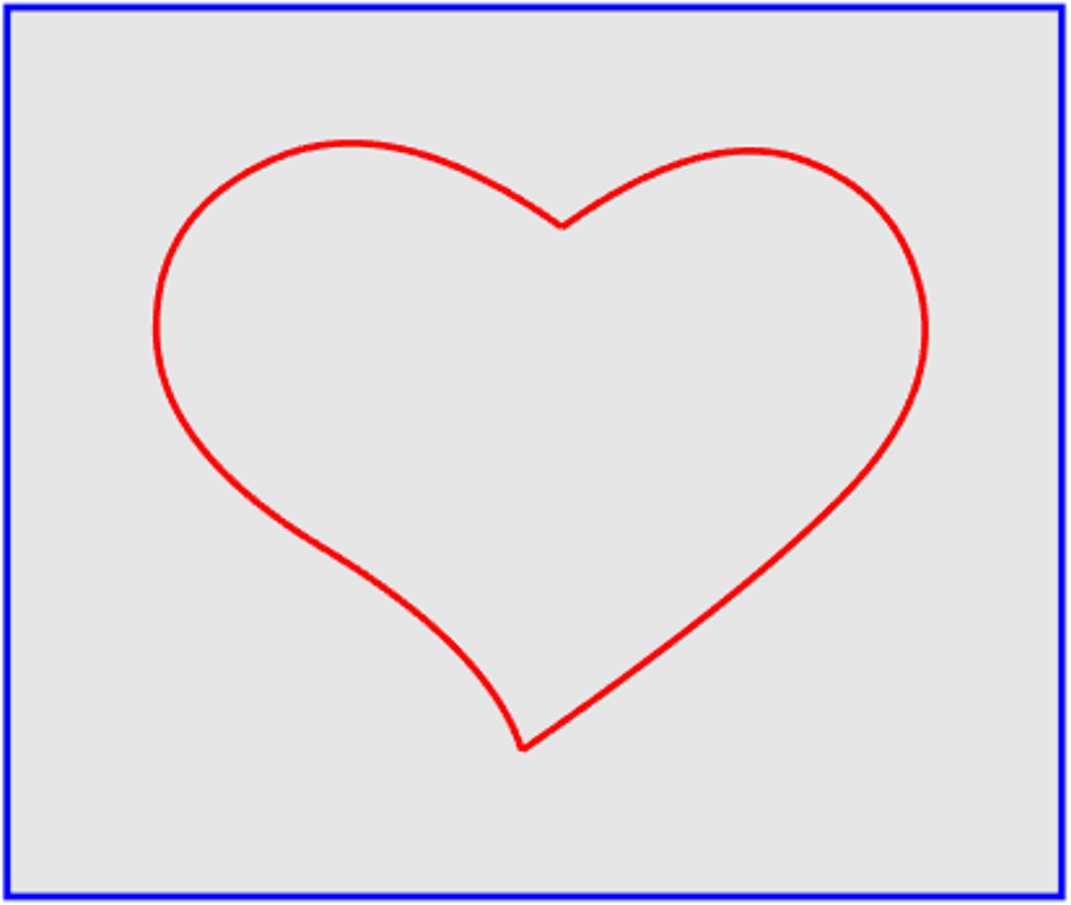}
		\caption*{\footnotesize{(a) Original domain with trimming curve $\gamma_T$}}
	\end{figure}
\end{minipage}
\begin{minipage}{0.32\textwidth}
	\begin{figure}[H]
		\centering
		\includegraphics[width=0.8\textwidth]{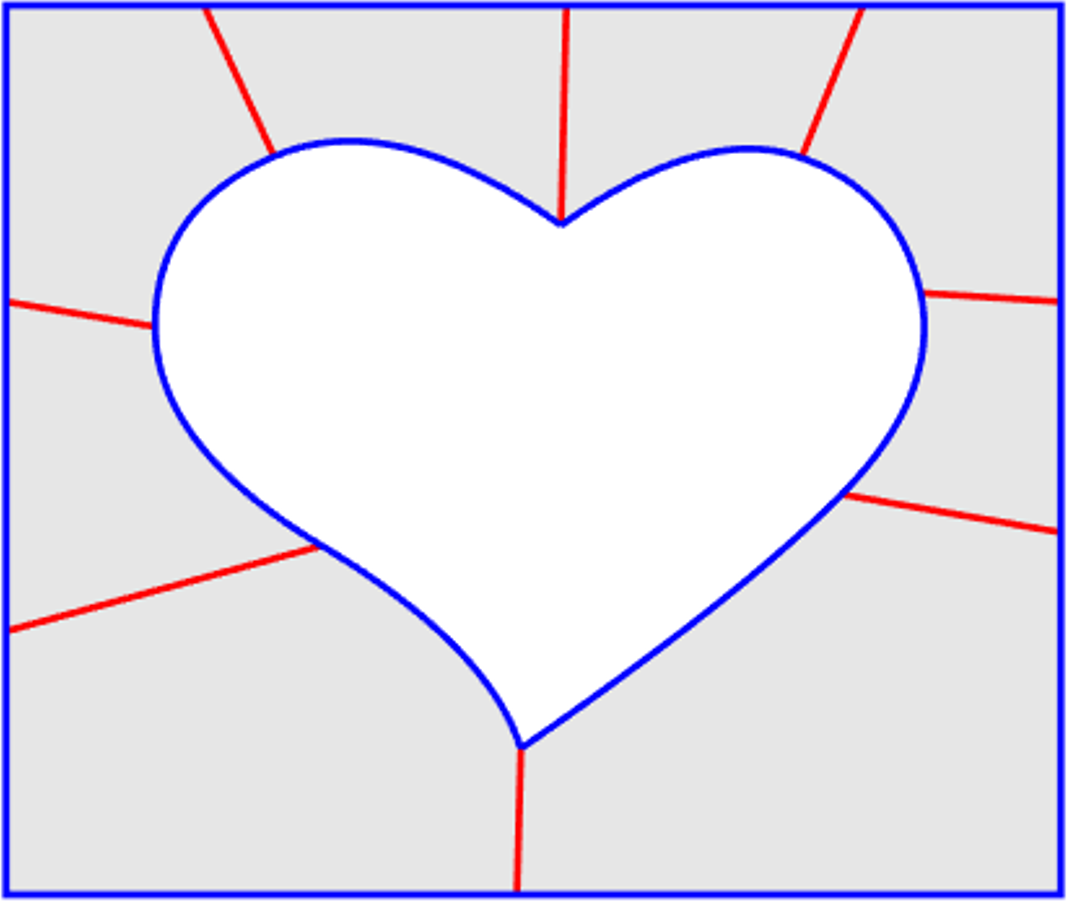}
		\caption*{\footnotesize{(b) Trimmed domain with cut lines.}}
	\end{figure}
\end{minipage}
\begin{minipage}{0.32\textwidth}
	\begin{figure}[H]
		\centering
		\includegraphics[width=0.8\textwidth]{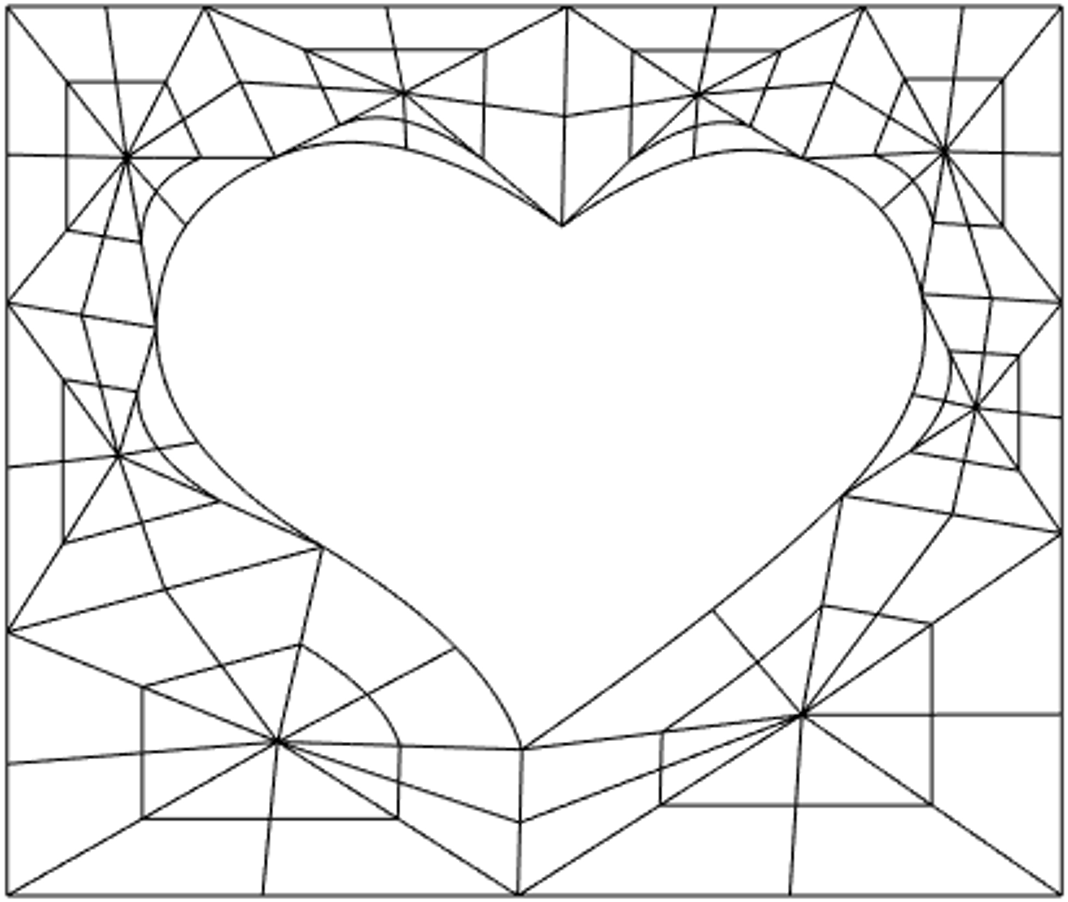}
		\caption*{\footnotesize{(c) Possible SB-mesh after the trimming.}}
	\end{figure}
\end{minipage}
    \caption{ \small Here we see a  trimming curve that would lead to a non-star domain. We can divide the trimmed domain into two star-shaped parts using a straight cut line.  }
    \label{Fig:Trimming_3}
\end{figure}
Before we come to numerical examples for the SB-IGA framework with $C^1$ coupling,  we briefly summarize the ideas for trimming in Alg. \ref{Algo}.
 \begin{algorithm}[H]
\small \caption{SB parametrization and trimming}\label{alg:cap}
\begin{algorithmic}[1]
\Require  Trimming curve $\gamma_T $; Untrimmed domain boundary curves $\gamma^{(i)}$
\State Compute intersection parameters $\zeta_k, s_k$ s.t.  $\gamma^{(i_k)} (\zeta_k)= \gamma_T(s_k)$
\If{there are no intersections} \Comment{Interior trimming curve}
    \State Divide trimmed domain into star-shaped blocks 
    \State Describe each boundary segment with NURBS curves  $\tilde{\gamma}^{(i)} \colon [0,1] \rightarrow \mathbb{R}^2$
    \State Compute suitable scaling centers 
\Else
    \State Check which boundary segments  define new boundary
     \State  Go to  line 3
\EndIf
\end{algorithmic}
\label{Algo}
\end{algorithm}

After we discussed how we can obtain $C^1$ basis functions on planar SB multi-patch domains, we exploit them  below to compute approximations to the Kirchhoff-Love shell model.

\section{Numerical Examples}\label{Section:NumExa}
In the following, the presented formulation is checked on its consistency at first by a simple test as square flat shell. In the further, the example of the Scordelis-Lo roof, known from the so-called 'shell obstacle course' is evaluated \cite{Belytschko1985,Macneal1985,Krysl2022}. Moreover, the double curved hyperbolic paraboloid \cite{Bathe2000} is evaluated as it considers negative Gaussian curvature and is a highly challenging example. Besides, the example of the Scordelis-Lo roof and further examples are evaluated in terms of trimmed structures as they can be found in literature \cite{Coradello2020,man2013,Coradello2020hierarchically} and are especially suitable for the boundary representation. Conducting an $h$-refinement for all examples, the models are validated by error norms or the displacements at specific points on reference solutions obtained analytically or from the literature. We denote that we further use $t$ as the thickness, $E$ as the Young's modulus, $\nu$ as the Poisson's ratio, and $h$ as mesh size for the underlying mesh with $1/h$ equidistant subdivisions with respect to both parametric coordinate directions. The approach has been implemented using MATLAB 2022 \cite{MATLAB:2022} in combination with the open source package GeoPDEs \cite{geopdesv3}. It is specifically designed to solve partial differential equations in the context of isogeometric analysis. We use the model from above, namely basis function in the space $\mathcal{V}_h^{M,1}$ for each component. But we note that due to the specific boundary conditions we have to reduce the number of test functions at specific places.

\subsection{Geometry approximation}\label{Geometry}
The coupling of the basis functions for the Kirchhoff-Love shell is done in the parametric domain $\tilde{\o}$ and $\mathbf{R}$ denotes the initial shell configuration parametrization. Sometimes the latter is not directly available in the form  \eqref{eq:Shell_param_form}. Then, due to the special structure of SB-IGA, it is not clear how to choose in general the control points in order to represent the initial shell configuration in an exact manner, to obtain $\mathbf{R}$ in the sense of \eqref{eq:Shell_param_form}, respectively. Thus we exploit the coupled $C^1$-smooth basis functions in the parametric domain to approximate the initial shell shape for the examples below. In this sense, we follow the isogeometric paradigm. To be more precise, if we state below the geometry mapping $\mathbf{R}$, we use in the actual computations an approximated $\mathbf{R}_{\textup{approx}} \approx \mathbf{R}$.   Doing so, we use a $L^2$-projection onto the test function space to get $\mathbf{R}_{\textup{approx}}$. Although this is a naive geometry approximation, it gives us even for coarse meshes reasonable results. Furthermore, several geometries like the hyperbolic paraboloid shape can be still represented exactly, except of rounding errors; see Fig. \ref{Fig:Para_approx} below.
\begin{figure}[H]
\begin{minipage}{0.5\textwidth}
	\begin{figure}[H]
		\centering
  \begin{tikzpicture}
		\node (myfirstpic) at (0,0) {\includegraphics[width=\textwidth]{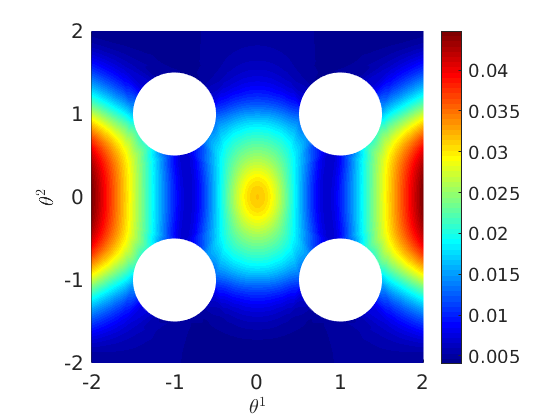}};
        \node at (-0.15,3) { \footnotesize{ $|\mathbf{R}_{\textup{approx}}(\theta^1,\theta^2)- \mathbf{R}(\theta^1,\theta^2)|$}};
  \end{tikzpicture}
		\caption*{\footnotesize{(a) Scordelis-Lo with 4 holes ($p=3, \, h=1/2$)}}
	\end{figure}
\end{minipage}
\begin{minipage}{0.5\textwidth}
	\begin{figure}[H]
		\centering
    \begin{tikzpicture}
		\node (myfirstpic) at (0,0) {\includegraphics[width=\textwidth]{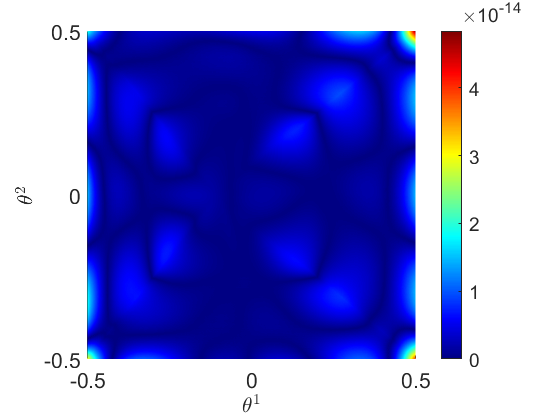}};
  \node at (-0.2,3) { \footnotesize $|\mathbf{R}_{\textup{approx}}(\theta^1,\theta^2)- \mathbf{R}(\theta^1,\theta^2)|$};
    \end{tikzpicture}
		\caption*{\footnotesize{(b) Hyperbolic paraboloid ($p=3, \, h=1/4$)}}
	\end{figure}
\end{minipage}
    \caption{ \small Here we see  the difference between the exact parametrization of the Scordelis-Lo roof with holes and the approximated parametrization mapping utilizing the coupled basis functions. To be more precise, the Euclidean norm difference  $|\mathbf{R}_{\textup{approx}}(\theta^1,\theta^2)- \mathbf{R}(\theta^1,\theta^2)|$ is plotted, where the underlying meshes are displayed in Fig. \ref{fig:Ex_5_Mesh} and Fig. \ref{fig:HypPara_Mesh}. Even for relatively coarse meshes we see only small differences and the geometry errors can be assumed to be small.}
    \label{Fig:Para_approx}
\end{figure}Furthermore, in principle, it is possible to use different NURBS for the initial geometry in order to have exact geometries. But, this requires more implementation effort due to the special form of the SB-IGA basis functions in view of quadrature rules. And on the other hand, we would leave in some sense the idea of \emph{iso}-geometric analysis.

\subsection{Smooth solution on a square shell}
The first example considering the shell formulation is a  flat shell of the square domain $\Omega = [0,2]^2$ under a smoothly distributed load. It is dedicated to investigating the convergence rates of the approach. The parametric domain coincides with the physical domain in this example. The properties are $E = 10^6$, $\nu =0.1$ and correspondingly $t = \left(12(1-\nu^2) /E\right)^{1/3}$. The boundary $\partial \Omega$ is fixed and a load $g$ is applied as $g_z = 4\pi^4\sin(\pi x) \sin(\pi y)$, such that the analytical solution of the deformation is $u_{z,ref}= \sin(\pi x) \sin(\pi y)$. The scaling center is placed in the middle of the structure and the mesh is evaluated for third, fourth and fifth order of basis functions. An exemplary mesh for $h=1/4$, $p=3,r=1$ for each patch and a corresponding deformation plot is shown in Fig. \ref{fig:Ex4_Mesh}. 
\begin{figure}[H] 
\centering
\scalebox{.9}{
\begin{minipage}{0.5\textwidth}
	\begin{figure}[H]
		\centering
		\includegraphics[width=\textwidth]{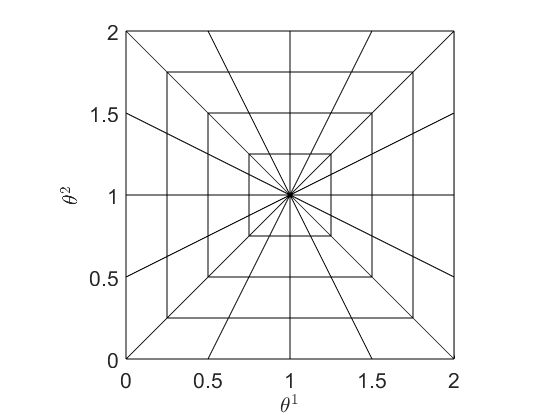}
		\caption*{\footnotesize{(a) Mesh }}
	\end{figure}
\end{minipage}
\begin{minipage}{0.5\textwidth}
	\begin{figure}[H]
		\centering
  \begin{tikzpicture}
      \node (myfirstpic) at (0,0) {\includegraphics[width=\textwidth]{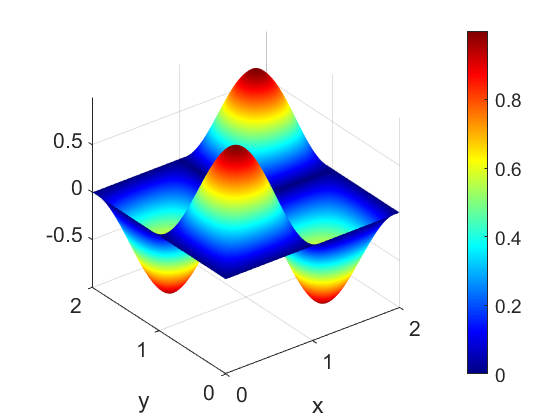}};
  \node at (2.1,0) {$|\mathbf{u}|$};
  \end{tikzpicture}
		\caption*{\footnotesize{(b) Deformed structure colored according to $|\mathbf{u}|$}}
	\end{figure}
\end{minipage}}
    \caption{\small Example of the smooth solution on a square shell. On the left, the underlying mesh of $h=1/4$ is pictured. On the right side, the corresponding deformed structure is shown with a plot of the deformation magnitude of the problem for $p=3$ and $r = 1$.}
    \label{fig:Ex4_Mesh}
\end{figure}
The results of the computation are compared to the analytical solution in terms of the $H^2$ seminorm and the $L^2$ norm defined as
\begin{flalign}
    | \mathbf{u} - \mathbf{u}_h |_{H^2(\Omega)} \quad \text{for the} \: H^2\text{-error} \\
    || \mathbf{u} - \mathbf{u}_h ||_{L^2(\Omega)} \quad \text{for the} \: L^2\text{-error}.
\end{flalign}
The convergence rates are shown in Fig. \ref{fig:Ex4Res}.
\begin{figure}[H]
\begin{minipage}{0.49\textwidth}
\begin{tikzpicture}
	\small \begin{loglogaxis} [
		width = \textwidth,
		xlabel={\footnotesize Mesh size $h$},
		ylabel={\footnotesize $|\mathbf{u} - \mathbf{u}_h|_{H^2(\Omega)}$},
		xmin=0.025, xmax=0.5,
		ymin=10^-7, ymax=2*10^0,
		xtick= {0.03,0.1,0.3,0.5},
		log x ticks with fixed point,
		ytick={10^-8,10^-7,10^-6,10^-5,10^-4,10^-3,10^-2,10^-1,10^0},
		legend pos=south east,
		legend columns = 3,
		xmajorgrids=true,
		ymajorgrids=true,
		grid style=dashed,
		]
		\addplot
		coordinates {
        (0.25,	1.622833492)
        (0.125,	0.357954789)
        (0.083333333,	0.156027042)
        (0.0625,	0.08664408)
        (0.05,	0.054904758)
        (0.041666667,	0.037829075)
        (0.035714286,	0.02761607)
        (0.03125,	0.021033008)
		};
		\addlegendentry{p=3, r=1},
		\addplot
		coordinates {
        (0.25,	0.185049305)
        (0.125,	0.023016661)
        (0.083333333,	0.006614898)
        (0.0625,	0.002728677)
        (0.05,	0.001377625)
        (0.041666667,	0.000786311)
		};
		\addlegendentry{p=4, r=1},
		\addplot[color=darkgray,mark=triangle*]
		coordinates {
        (0.25,	0.014239636)
        (0.125,	0.000873747)
        (0.083333333,	0.000171277)
        (0.0625,	5.40E-05)
        (0.05,	2.32E-05)
		};
		\addlegendentry{p=5, r=1},
		\addplot[color=blue, mark size=1.5pt,mark=none, dashed]
		coordinates {
			(0.25, 0.0625*10)
			(0.03125, 0.0009765625*10)};
		\addplot[color=red, mark size=1.5pt,mark=none, dashed]
		coordinates {
			(0.25, 0.015625*5)
			(0.03571429, 0.00004555395226*5)};
		\addplot[color=darkgray, mark size=1.5pt,mark=none, dashed]
		coordinates {
			(0.25, 0.00390625*2)
			(0.03571429, 0.000001626927062*2)};
		\legend{$p=3$,$p=4$,$p=5$,$\mathcal{O}(h^2)$,$\mathcal{O}(h^3)$,$\mathcal{O}(h^4)$}
	\end{loglogaxis}
\end{tikzpicture}
\caption*{\footnotesize{(a) Error in $H^2$ seminorm}}
\end{minipage}
\begin{minipage}{0.49\textwidth}
\begin{tikzpicture}
	\small \begin{loglogaxis} [
		width = \textwidth,
		xlabel={\footnotesize Mesh size $h$},
		ylabel={\footnotesize $||\mathbf{u} - \mathbf{u}_h||_{L^2(\Omega)}$},
		xmin=0.025, xmax=0.5,
		ymin=10^-13, ymax=10^-1,
		xtick={0.03,0.1,0.3,0.5},
		log x ticks with fixed point,
		ytick={10^-13,10^-12,10^-11,10^-10,10^-9,10^-8,10^-7,10^-6,10^-5,10^-4,10^-3,10^-2},
		legend pos=south east,
		legend columns = 3,
		xmajorgrids=true,
		ymajorgrids=true,
		grid style=dashed,
		]
		\addplot
		coordinates {
        (0.25,	0.009116595)
        (0.125,	0.000431354)
        (0.083333333,	8.1643E-05)
        (0.0625,	2.50139E-05)
        (0.05,	9.99218E-06)
        (0.041666667,	4.72578E-06)
        (0.035714286,	2.51199E-06)
        (0.03125,	1.45484E-06)
		};
		\addlegendentry{p=3, r=1},
		\addplot
		coordinates {
        (0.25,	0.000300825)
        (0.125,	9.55E-06)
        (0.083333333,	1.16E-06)
        (0.0625,	2.55E-07)
        (0.05,	7.89E-08)
        (0.041666667,	3.03E-08)
		};
		\addlegendentry{p=4, r=1},
		\addplot[color=darkgray,mark=triangle*]
		coordinates {
        (0.25,	1.28E-05)
        (0.125,	1.74E-07)
        (0.083333333,	1.43E-08)
        (0.0625,	4.30E-09)
        (0.05,	1.82E-09)
		};
		\addlegendentry{p=5, r=1},
		\addplot[color=blue, mark size=1.5pt,mark=none, dashed]
		coordinates {
			(0.25, 0.00390625*0.5)
			(0.03125, 0.0000009536743*0.5)
			};
		\addplot[color=red, mark size=1.5pt,mark=none, dashed]
		coordinates {
			(0.25, 0.0009765625*0.1)
			(0.03125, 0.0000000298*0.1)};
		\addplot[color=darkgray, mark size=1.5pt,mark=none, dashed]
		coordinates {
			(0.25, 0.000244140625*0.012)
			(0.03125, 0.0000000009313225746*0.012)};
        \legend{$p=3$,$p=4$,$p=5$,$\mathcal{O}(h^4)$,$\mathcal{O}(h^5)$,$\mathcal{O}(h^6)$}
	\end{loglogaxis}
\end{tikzpicture}
\caption*{\footnotesize{(b) Error in $L^2$ norm}}
\end{minipage}
\caption{\small Convergence studies of the $H^2$-errors (left) and the $L^2$-errors (right) on the example of the
smooth solution on a square shell and orders of $p = 3$, $p = 4$ and $p = 5$.}
\label{fig:Ex4Res}
\end{figure}

The convergence rates indicate  for both error estimations  optimal convergence rate $\mathcal{O}(h^{p-1})$ for the $H^2$ seminorm and $\mathcal{O}(h^{p+1})$ for the $L^2$ norm. The deviations for $p=5$ might be caused by large condition numbers together with rounding errors.

\subsection{Scordelis-Lo roof}
The Scordelis-Lo roof is a well-known model described by \cite{Scordelis1964}. It is a cylinder cutout of $80^{\circ}$ with dimensions of radius $R = 25$ and length $L =50$. The thickness is $t=0.25$ and the material parameters are $E = 4.32\cdot 10^8$ and $\nu = 0.0$. The structure is supported by rigid diaphragms at the curved edges and free at the straight edges. The Scordelis-Lo roof is investigated in three setups as they are the untrimmed structure, trimmed with an elliptic hole under gravity load, and trimmed with four holes under a single load. The untrimmed example is investigated to compare the boundary parametrization to standard IGA parametrizations. Further, the trimmed roofs are compared to numerical results obtained from the literature.
The exact parametrization function is chosen to
\begin{equation}
     \mathbf{R}(\theta^1,\theta^2) = \left[25\sin( \pi \theta^1 \frac{4}{9L_{\tilde{\Omega},\theta^1}}) \qquad \theta^2\frac{50}{L_{\tilde{\Omega},\theta^2}} \qquad 25\cos(\pi \theta^1 \frac{4}{9L_{\tilde{\Omega},\theta^1}}) \right]^T
\end{equation}
with $L_{\tilde{\Omega},\theta^1}$ and $L_{\tilde{\Omega},\theta^2}$ being the corresponding side length of $\tilde{\Omega}$ in each parametric direction.
\subsubsection*{Untrimmed Scordelis-Lo roof}
At first, the untrimmed roof under a gravity load of $g_z = 90$ per unit area is investigated and the reference solution is given as the vertical displacement at the midpoint of the straight, free edge as $u_{ref} = 0.3024$ \cite{Macneal1985}, \cite{Belytschko1985}. Since the reference considers shear deformation which is not represented in Kirchhoff-Love shells, the reference solution herein refers to \cite{Kiendl2009} with $u_{ref}=0.3006$ in terms of the vertical deflection at the same point of interest. The parametric mesh, defined on the domain $\tilde{\Omega} = [-0.5,0.5],[-1,1]$, and the corresponding initial shell configuration are shown in Fig. \ref{fig:ScoLo_Mesh}. The parametric mesh considers an offset of the scaling center with $\theta^1_{off} = 0.2$ and $\theta^2_{off} = 0.1$ to show the generality of the formulation.\\
\begin{figure}
\centering
\scalebox{.9}{
\hspace*{-0.95cm}
\begin{minipage}{0.49\textwidth}
	\begin{figure}[H]
		\centering
			\begin{tikzpicture}
			\node (myfirstpic) at (0,0) {\includegraphics[width=\textwidth]{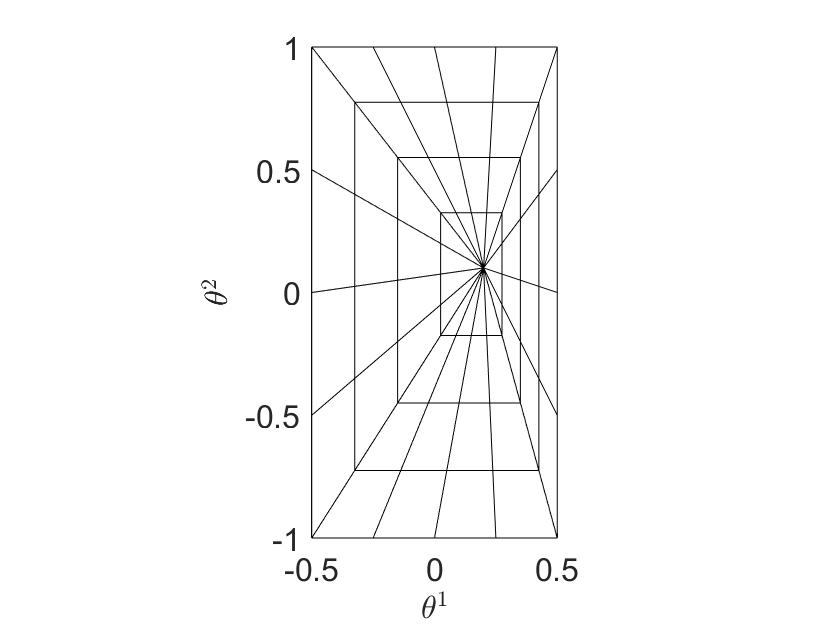}};
    \filldraw[red] (1.22,0.22) circle (2pt);
			\node at (0.15,2.7) {$\tilde{\o}$};
		\end{tikzpicture}
		\caption*{\footnotesize{(a) Parametric mesh}}
	\end{figure}
\end{minipage}\hspace*{-1cm}
\begin{minipage}{1cm}
\vspace{-3cm}
  \begin{tikzpicture}
      \draw[->,very thick, out=30,in=150] (-1,1) to (0,1);
      \node at (-0.5,1.4) {$\mathbf{R}$};
  \end{tikzpicture}
\end{minipage}
\begin{minipage}{0.49\textwidth}
	\begin{figure}[H]
		\centering
    \begin{tikzpicture}
	\node (myfirstpic) at (0,0) {\includegraphics[width=\textwidth]{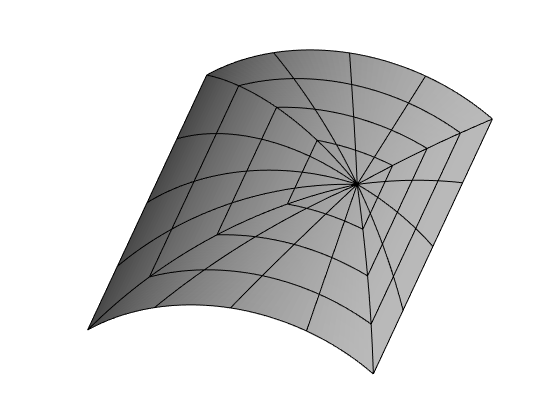}};
	\draw[|-|,thick] (1+0.2+0.2,-1.88-0.2-0.15) to (2.7+0.2+0.12,1.14-0.15+0.12);
	\node[rotate=65] at (2.4,-0.76) {$L=50$};
	\draw[thick,dashed] (-0.5,-3.1) -- (1.2,-1.88-0.25);
	 \draw [<->,thick] (-85:2.61)  arc (50:130:1) node [above,pos=0.45] {$\theta$};
	 \draw[thick,dashed] (-0.53,-3.1) -- (-2.55,-1.58);
  \filldraw[red] (2.04,0-0.47) circle (2pt);
	\end{tikzpicture}
		\caption*{\footnotesize{(b) Initial shell configuration}}
	\end{figure}
\end{minipage}}
    \centering
    \caption{\small Parametric mesh and initial shell configuration of the Scordelis-Lo roof for $h=1/2$.}
    \label{fig:ScoLo_Mesh}
\end{figure}
The results of basis functions of order $p=3$, $p=4$, and $p=5$ are shown in Fig. \ref{fig:ScoLo_Results} on the left side. Even though the scaling center is chosen in non-centered discretization, the results converge in a smooth way towards the reference solution. A comparison to a standard IGA computation of the Scordelis-Lo roof shows that the convergence of IGA-basis functions is at better results for fewer degrees of freedom. This is a natural consequence of the SB-IGA approach since four patches are coupled towards one scaling center in this example. This points out, that the SB-IGA formulation is not as computationally efficient as IGA for standard single-patch structures. However, the example shows a similar convergence rate for the IGA and the SB-IGA approach which validates the proposed method.
\begin{figure}[H]
\begin{minipage}{0.49\textwidth}
\begin{tikzpicture}
	\small \begin{semilogxaxis} [
		width = \textwidth,
		xlabel={\footnotesize Degrees of freedom},
		ylabel={\footnotesize $u_z/u_{z,ref}$},
		xmin=200, xmax=20000,
		ymin=0, ymax=1.2,
		xtick={10^3,10^4,10^5},
		ytick={0,0.2, 0.4, 0.6, 0.8, 1.0, 1.2},
		legend pos=south east,
		legend columns = 3,
		xmajorgrids=true,
		ymajorgrids=true,
		grid style=dashed,
		]
		\addplot
		coordinates {
		(540,-0.199134265765994/-0.3006) 
		(960,-0.287382260784672/-0.3006) 
		(1500,-0.297919759236538/-0.3006) 
		(2160,-0.299870894030744/-0.3006) 
		(2940,-0.300356028965457/-0.3006) 
		(3840,-0.300487722657382/-0.3006) 
		(4860,-0.300547197770159/-0.3006) 
		(6000,-0.300566575375962/-0.3006) 
		(7260,-0.300579804472187/-0.3006) 
		};
		\addlegendentry{$p=3$},
		\addplot
		coordinates {
		(960,-0.299712923427739/-0.3006) 
		(1815,-0.300544603380642/-0.3006) 
		(2940,-0.300580393454797/-0.3006) 
		(4335,-0.300590686691717/-0.3006) 
		(6000,-0.300591935740892/-0.3006) 
		(7935,-0.300592325630061/-0.3006) 
		(10140,-0.300592406875425/-0.3006) 
		(12615,-0.300592440027800/-0.3006) 
		(15360,-0.300592446353812/-0.3006) 
        };
		\addlegendentry{$p=4$},
		\addplot[color=darkgray,mark=triangle*]
		coordinates {
		(1500,-0.300620472818539/-0.3006) 
		(2940,-0.300592514512290/-0.3006) 
		(4860,-0.300592270225559/-0.3006) 
		(7260,-0.300592479622134/-0.3006) 
		(10140,-0.300592462938445/-0.3006) 
		(13500,-0.300592462971043/-0.3006) 
		(17340,-0.300592458789788/-0.3006) 
		(21660,-0.300592455064871/-0.3006) 
		(26460,-0.300592467915480/-0.3006) 
        };
		\addlegendentry{$p=5$},
		\addplot[color=black,mark=none]
		coordinates {
        (1,  1)
        (100000, 1)
        };
	\end{semilogxaxis}
\end{tikzpicture}
\caption*{\footnotesize{(a) Normalized displacement $u_{z}/u_{ref}$}}
\end{minipage}
\begin{minipage}{0.49\textwidth}
\begin{tikzpicture}
	\small \begin{loglogaxis} [
		width = \textwidth,
		xlabel={\footnotesize Degrees of freedom},
		ylabel={\footnotesize $1-|u_z/u_{z,ref}|$},
		xmin=60, xmax=20000,
		ymin=10^-11, ymax=1,
		xtick={10^3,10^4,10^5},
		ytick={10^-11,10^-10,10^-9,10^-8,10^-7,10^-6,10^-5,10^-4,10^-3,10^-2,10^-1,10^0},
		legend pos=south east,
		legend columns = 2,
		xmajorgrids=true,
		ymajorgrids=true,
		grid style=dashed,
		]
		\addplot
		coordinates {
		(540,1-0.199134265765994/0.300592456637501) 
		(960,1-0.287382260784672/0.300592456637501) 
		(1500,1-0.297919759236538/0.300592456637501) 
		(2160,1-0.299870894030744/0.300592456637501) 
		(2940,1-0.300356028965457/0.300592456637501) 
		(3840,1-0.300487722657382/0.300592456637501) 
		(4860,1-0.300547197770159/0.300592456637501) 
		(6000,1-0.300566575375962/0.300592456637501) 
		(7260,1-0.300579804472187/0.300592456637501) 
		};
		\addlegendentry{$p=3$},
		\addplot
		coordinates {
		(960,1-0.299712923427739/0.300592456637501) 
		(1815,1-0.300544603380642/0.300592456637501) 
		(2940,1-0.300580393454797/0.300592456637501) 
		(4335,1-0.300590686691717/0.300592456637501) 
		(6000,1.7329e-06) 
		(7935,4.35831e-07) 
		(10140,1.65547e-07) 
		(12615,5.52565e-08) 
		(15360,3.42114e-08) 
        };
		\addlegendentry{$p=4$},
		\addplot[color=blue,mark=o]
		coordinates {
		(75,1-0.230808441454189/0.300592456637501) 
		(147,1-0.279944686988844/0.300592456637501) 
		(363,1-0.300065498001738/0.300592456637501) 
		(1083,1-0.300584157111949/0.300592456637501) 
        (3675,4.17223e-07) 
		};
		\addlegendentry{$p=3$, IGA},
		\addplot[color=red,mark=square]
		coordinates {
		(108,1-0.272771400767432/0.300592456637501) 
		(192,1-0.300000190520429/0.300592456637501) 
		(432,1-0.300589846259662/0.300592456637501) 
        (1200,8.15676e-08) 
        (3888,3.07955e-10) 
        };
		\addlegendentry{$p=4$, IGA},
	\end{loglogaxis}
\end{tikzpicture}
\caption*{\footnotesize{(b) Comparison of $1-| u_{z}/u_{ref} | $ with \cite{Kiendl2009}}}
\end{minipage}
    \caption{\small On the left side, the convergence studies of the displacement $u_z$ at the middle of the straight edge with orders of $p = 3$, $p = 4$, and $p=5$ are shown. On the right side, the convergence rate is compared to the standard IGA discretization.}
   \label{fig:ScoLo_Results}
\end{figure}
\subsubsection*{Scordelis-Lo roof with one hole under gravity load}
For the incorporation of trimmed geometries, the previously defined Scordelis-Lo roof is subjected to trimming in the parametric domain of one hole of radius $r = 1$. Same as in the untrimmed roof, the structure is subjected to a gravity load of $g_z = 90$ per unit area. The reference solution of the vertical displacement at the midpoint of the straight, free edge as $u_{ref} = -0.361078869965661$ is obtained by overkill solution of the method for a Kirchhoff-Love shell proposed in \cite{Coradello2020} with order $p=5$ and $62.456$ active elements. The parametric mesh of domain $\tilde{\Omega} = [-4,4]^2$ and the initial shell configuration is shown in Fig. \ref{fig:ScoLo_1hole_Mesh}. The point of interest is marked in red in the figure.
\begin{figure}[H]
\centering
\scalebox{.9}{
\hspace*{-0.95cm}
\begin{minipage}{0.49\textwidth}
	\begin{figure}[H]
		\centering
			\begin{tikzpicture}
			\node (myfirstpic) at (0,0) {\includegraphics[width=\textwidth]{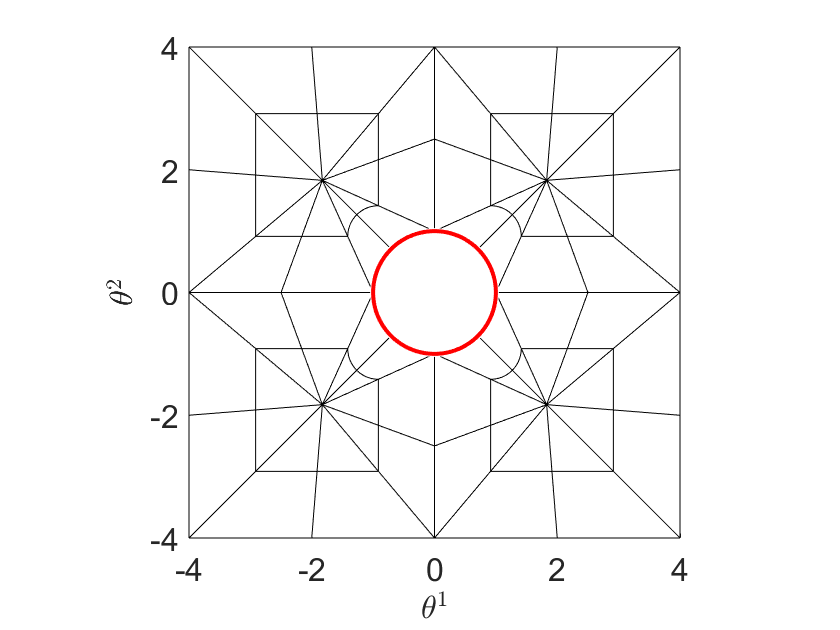}};
			\node at (0.15,2.7) {$\tilde{\o}$};
    \filldraw[red] (2.28,0.20) circle (2pt);
		\end{tikzpicture}
		\caption*{\footnotesize{(a) Parametric mesh}}
	\end{figure}
\end{minipage}\hspace*{-1cm}
\begin{minipage}{1cm}
\vspace{-3cm}
  \begin{tikzpicture}
      \draw[->,very thick, out=30,in=150] (-1,1) to (0,1);
      \node at (-0.5,1.4) {$\mathbf{R}$};
  \end{tikzpicture}
\end{minipage}
\begin{minipage}{0.49\textwidth}
	\begin{figure}[H]
		\centering
    \begin{tikzpicture}
	\node (myfirstpic) at (0,0) {\includegraphics[width=\textwidth]{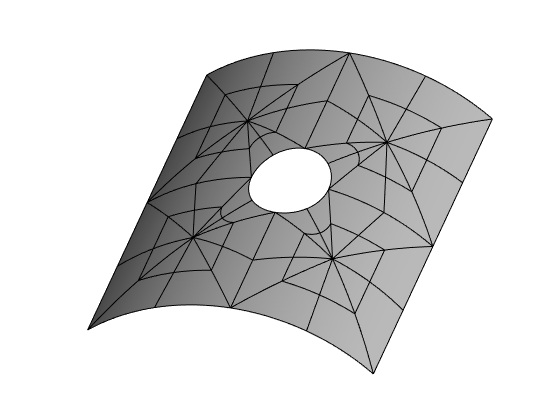}};
	\draw[|-|,thick] (1+0.2+0.2,-1.88-0.2-0.15) to (2.7+0.2+0.12,1.14-0.15+0.12);
	\node[rotate=65] at (2.4,-0.76) {$L=50$};
	\draw[thick,dashed] (-0.5,-3.1) -- (1.2,-1.88-0.25);
	 \draw [<->,thick] (-85:2.61)  arc (50:130:1) node [above,pos=0.45] {$\theta$};
	 \draw[thick,dashed] (-0.53,-3.1) -- (-2.55,-1.58);
  \filldraw[red] (2.04,0-0.47) circle (2pt);
	\end{tikzpicture}
		\caption*{\footnotesize{(b) Initial shell configuration}}
	\end{figure}
\end{minipage}}
    \centering
    \caption{\small Parametric mesh of the Scordelis-Lo roof with hole and the initial shell configuration of $h=1/2$.}
    \label{fig:ScoLo_1hole_Mesh}
\end{figure}
The results of the vertical deformation at the investigated point of the model with respect to the reference solution in Fig. \ref{fig:ScoLo1H_res} show good agreement. Accurate solutions are obtained even from the initial mesh. A slight underestimation is visible for the converged results, however, the difference is less than $0.5\%$ for $n=6$ and $p=5$. On the right side, the deformation plot for $p=5$ and $h=1/6$ is figured. 
\begin{figure}[H]
\begin{minipage}{0.49\textwidth}
\begin{tikzpicture}
	\small\begin{semilogxaxis} [
		width = \textwidth,
		xlabel={\footnotesize Degrees of freedom},
		ylabel={\footnotesize $u_z/u_{z,ref}$},
		xmin=500, xmax=50000,
		ymin=0.8, ymax=1.05,
		xtick={10^3,10^4,10^5},
		ytick={0.8,0.85,0.9,0.95, 1.0,1.05},
		legend pos=south east,
		legend columns = 3,
		xmajorgrids=true,
		ymajorgrids=true,
		grid style=dashed,
		]
		\addplot
		coordinates {
		(2268,-0.356057492203437/-0.361078869965661) 
		(4032,-0.358819580524420/-0.361078869965661) 
		(6300,-0.359297399764987/-0.361078869965661) 
		(9072,-0.359388295403222/-0.361078869965661) 
		(12348,-0.359415504898379/-0.361078869965661) 
		(16128,-0.359426540574916/-0.361078869965661) 
		(20412,-0.359431729751428/-0.361078869965661) 
		(25200,-0.359434478677752/-0.361078869965661) 
		(30492,-0.359436048979346/-0.361078869965661) 
		};
		\addlegendentry{$p=3$},
		\addplot
		coordinates {
		(4032,-0.359327511547032/-0.361078869965661) 
		(7623,-0.359427491995432/-0.361078869965661) 
		(12348,-0.359436493164539/-0.361078869965661) 
		(18207,-0.359438135353287/-0.361078869965661) 
		(25200,-0.359438665622433/-0.361078869965661) 
		(33327,-0.359438866142053/-0.361078869965661) 
		(42588,-0.359438971637159/-0.361078869965661) 
		(52983,-0.359438800473469/-0.361078869965661) 
		(64512,-0.359439145644459/-0.361078869965661) 
        };
		\addlegendentry{$p=4$},
		\addplot[color=darkgray,mark=triangle*]
		coordinates {
		(6300,-0.359435107994182/-0.361078869965661) 
		(12348,-0.359438744090236/-0.361078869965661) 
		(20412,-0.359438959704159/-0.361078869965661) 
		(30492,-0.359439020034924/-0.361078869965661) 
		(42588,-0.359439043169771/-0.361078869965661) 
        };
		\addlegendentry{$p=5$},
		\addplot[color=black,mark=none]
		coordinates {
        (1,  1)
        (100000, 1)
        };
	\end{semilogxaxis}
\end{tikzpicture}
\caption*{\footnotesize{(a) Normalized displacement $u_z / u_{ref}$}}
\end{minipage}
\begin{minipage}{0.5\textwidth}
\vspace{-1cm}
	\begin{figure}[H]
		\centering
  \begin{tikzpicture}
      \node (myfirstpic) at (0,0) {\includegraphics[width=\textwidth]{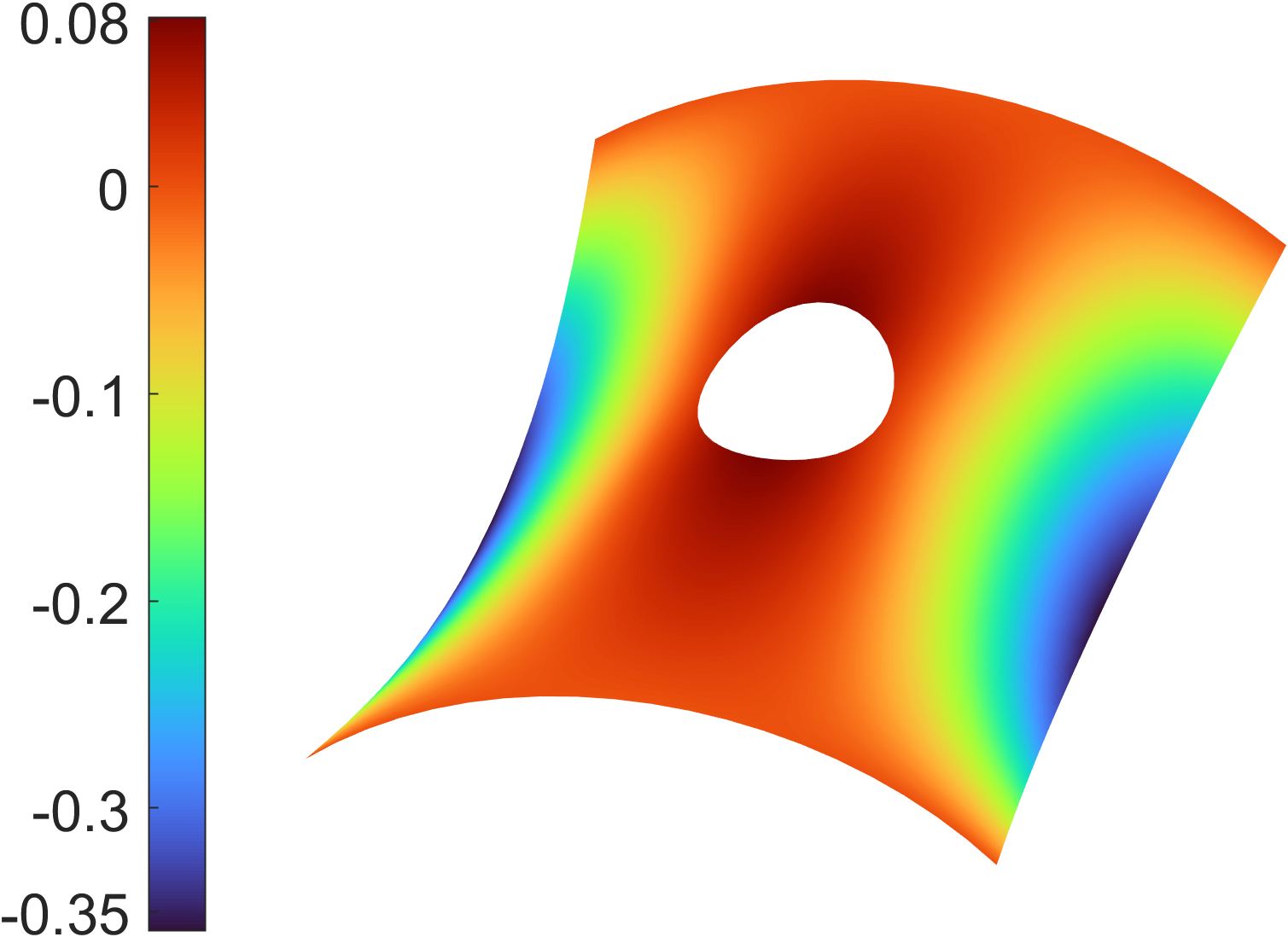}};
      \node at (-2.2,0) {$u_z$};
  \end{tikzpicture}
	\end{figure}
    \caption*{\footnotesize{(b) Deformation plot of component $u_z$ }}
\end{minipage}
    \caption{ \small On the left side, the convergence study on of the Scordelis-Lo roof with one hole under gravity load is figured. On the right side a plot of the deformation component $u_z$ is shown for a mesh of $h=1/6$ and $p=5$, $r=1$.}
   \label{fig:ScoLo1H_res}
\end{figure}
\subsubsection*{Scordelis-Lo roof with four holes under single load}
Further, the Scordelis-Lo roof is subjected to trimming in the parametric domain of four equally spaced holes of radius $r = 0.5$. The loading is changed to a point load in the vertical direction in the center of the structure of $F_z= 10^{-5}$. The example is shown in Fig. \ref{fig:Ex_5_Mesh} with the trimming curves shown in red and the point load applied in the center in green. 
\begin{figure}[H]
\centering
\scalebox{.9}{
\hspace*{-0.95cm}
\begin{minipage}{0.49\textwidth}
	\begin{figure}[H]
		\centering
			\begin{tikzpicture}
			\node (myfirstpic) at (0,0) {\includegraphics[width=\textwidth]{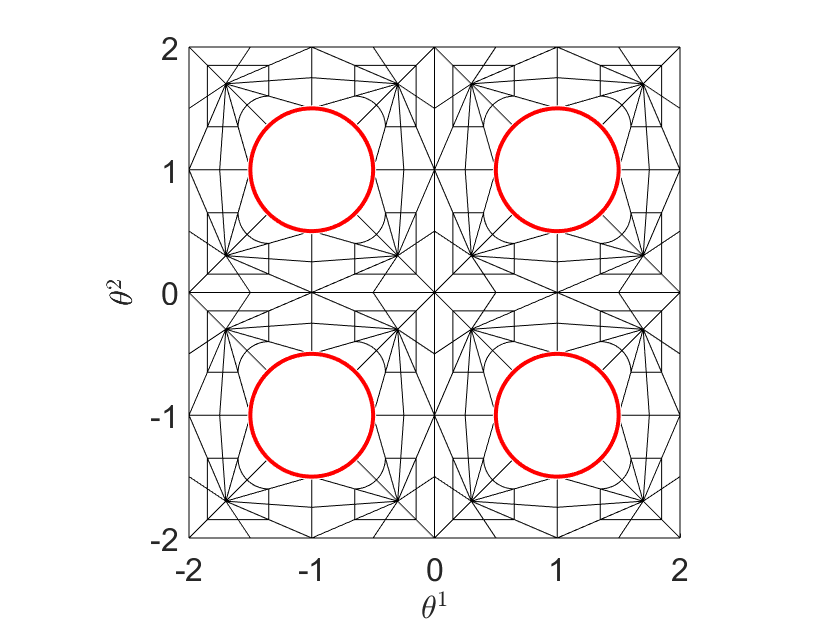}};
			\node at (0.15,2.7) {$\tilde{\o}$};
        \filldraw[green] (0.15,0.23) circle (2.2pt);
		\end{tikzpicture}
		\caption*{\footnotesize{(a) Parametric mesh}}
	\end{figure}
\end{minipage}\hspace*{-1cm}
\begin{minipage}{1cm}
\vspace{-3cm}
  \begin{tikzpicture}
      \draw[->,very thick, out=30,in=150] (-1,1) to (0,1);
      \node at (-0.5,1.4) {$\mathbf{R}$};
  \end{tikzpicture}
\end{minipage}
\begin{minipage}{0.49\textwidth}
	\begin{figure}[H]
		\centering
    \begin{tikzpicture}
	\node (myfirstpic) at (0,0) {\includegraphics[width=\textwidth]{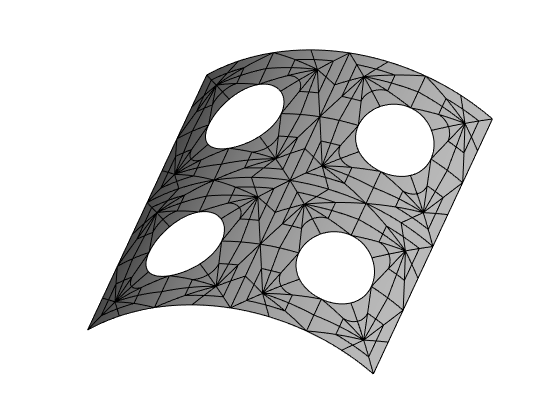}};
	\draw[|-|,thick] (1+0.2+0.2,-1.88-0.2-0.15) to (2.7+0.2+0.12,1.14-0.15+0.12);
	\node[rotate=65] at (2.4,-0.76) {$L=50$};
	\draw[thick,dashed] (-0.5,-3.1) -- (1.2,-1.88-0.25);
	 \draw [<->,thick] (-85:2.61)  arc (50:130:1) node [above,pos=0.45] {$\theta$};
	 \draw[thick,dashed] (-0.53,-3.1) -- (-2.55,-1.58);
    \draw[<-, line width=2pt, green] (0.13,0.4) -- (0.13, 1.4);
    \node[green, very thick] at (0.5,1.2) {$F_z$};
	\end{tikzpicture}
		\caption*{\footnotesize{(b) Initial shell configuration}}
	\end{figure}
\end{minipage}}
    \centering
    \caption{\small Mesh and parametric representation of the Scordelis-Lo roof with four holes ($h=1/2$).}
    \label{fig:Ex_5_Mesh}
\end{figure}
The results of the model can be compared to deformation plots presented in \cite[Figure 11 (a)]{Coradello2020}, which proposes an IGA shell with adaptive refinement. Fig. \ref{fig:Ex_5_Comp} shows the deformed structure with the displacement in $z$-direction for the SB-IGA approach of $p=3$, $r = 1$ and $h = 0.125$.
\vspace{-0.5cm}
\begin{figure}[H]
\begin{minipage}{0.49\textwidth}
	\begin{figure}[H]
		\centering
  \begin{tikzpicture}
      \node (myfirstpic) at (0,0) {\includegraphics[width=\textwidth]{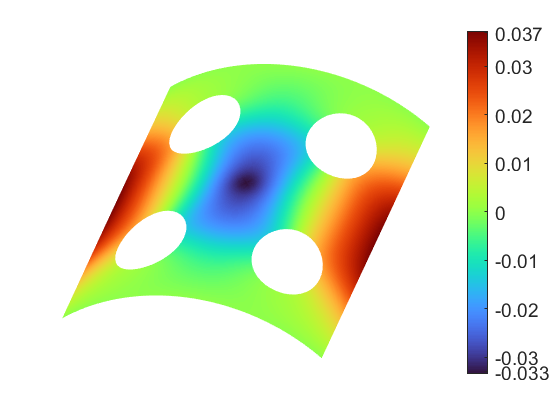}};
      \node at (4,0) {$u_z$};
  \end{tikzpicture}
	\end{figure}
\end{minipage}
    \centering 
    \caption{\small Deformed Scordelis-Lo roof with four holes with coloring of the $z$-displacement obtained for $p=3$, $h=1/6$, and $r=1$.}
    \label{fig:Ex_5_Comp}
\end{figure}
The results point out, that the approach derived herein is capable of computing the deformation for trimmed domains of shells. The deformation plot has its maximum and minimum at $u_{z,min} \approx -3.3 \cdot 10^{-1}$ and $u_{z,max} \approx 3.7 \cdot 10^{-1}$, respectively. These results are also obtained in \cite{Coradello2020}. Good agreement is visible on the whole domain including the area of the load application and the free edges.

\subsection{Hyperbolic paraboloid}
The model of the hyperbolic paraboloid is a saddle structure described in \cite{Bathe2000}. Besides its complexity due to the double-curved structure and the negative curvature, the problem is compared to the ASG$^1$ approach for standard IGA multi-patches. The structure is clamped one-sided and loaded by a uniformly distributed load of $g_z = -8000 \cdot t$. The properties are length $L = 1$, Young's modulus $E=2\cdot 10^{11}$, Poisson's ratio $\nu = 0.3$ and thickness of $t=1/1000$ and $t = 1/100$. The exact parametrization function of both hyperbolic paraboloid shells is chosen as
\begin{equation}
     \mathbf{R}(\theta^1,\theta^2) = \left[\theta^1 \qquad \theta^2 \qquad \left(\theta^1\right)^2-\left(\theta^2\right)^2 \right]^T.
\end{equation}
The reference solution is obtained from \cite{Bathe2000} as the vertical deflection at the middle of the free, curved edge with $u_{z,t=1/1000} = -0.0063941$ and $u_{z,t=1/100} = -0.93137\cdot 10^{-4}$. The partition of the parametric mesh of domain $\tilde{\Omega} = [-0.5,0.5]^2$ and the initial shell configuration is shown in Fig. \ref{fig:HypPara_Mesh} with the clamped boundary highlighted in blue and the point of interest highlighted in red. The scaling center is placed with an offset in $\theta^2$-direction as $\theta^2_{off} = -0.1$. This yields the possibility to have a higher mesh density towards the Dirichlet boundary conditions naturally enforced by moving the scaling center. 
\begin{figure}[H]
\centering
\scalebox{.9}{
\hspace*{-0.95cm}
\begin{minipage}{0.49\textwidth}
	\begin{figure}[H]
		\centering
			\begin{tikzpicture}
			\node (myfirstpic) at (0,0) {\includegraphics[width=\textwidth]{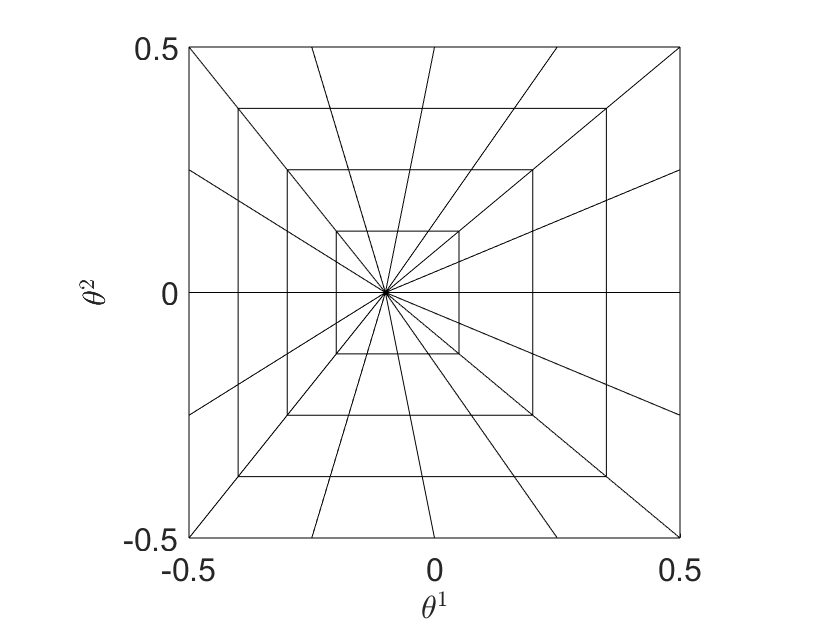}};
			\node at (0.15,2.7) {$\tilde{\o}$};
            \draw[blue,very thick] (-2.05,-1.945) to (-2.05,2.35);
            \filldraw[red] (2.29,0.21) circle (2pt);
		\end{tikzpicture}
		\caption*{\footnotesize{(a) Parametric mesh}}
	\end{figure}
\end{minipage}\hspace*{-1cm}
\begin{minipage}{1cm}
\vspace{-3cm}
  \begin{tikzpicture}
      \draw[->,very thick, out=30,in=150] (-1,1) to (0,1);
      \node at (-0.5,1.4) {$\mathbf{R}$};
  \end{tikzpicture}
\end{minipage}
\begin{minipage}{0.49\textwidth}
	\begin{figure}[H]
		\centering
    \begin{tikzpicture}
	\node (myfirstpic) at (0,0) {\includegraphics[width=\textwidth]{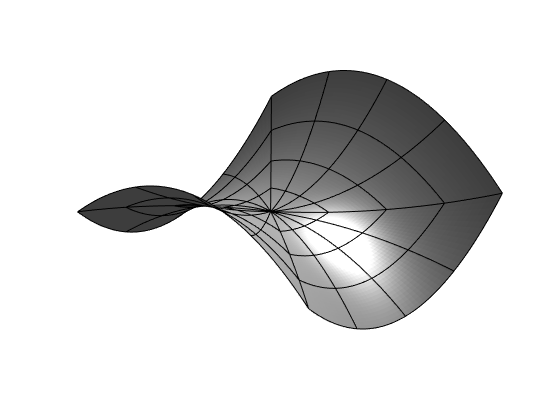}};
	\draw[|-|,thick] (0.3,-1.20-1) to (2.7+0.25,0.2-1);
    \draw[|-|,thick] (0.3,-1.20-1) to (-2.64,0.0-1);
	\node[rotate=32] at (1.7,-0.76-1) {$L=1$};
    \node[rotate=-23] at (-1.5,-0.76-1) {$L=1$};
    \filldraw[red] (-1.05,0.15) circle (2pt);
    \draw [blue,very thick] plot [smooth] coordinates {(-0.18,1.5) (0.5,1.8)  (1.25,1.8) (1.98,1.4) (2.97,0.2)}; 
	\end{tikzpicture}
		\caption*{\footnotesize{(b) Initial shell configuration}}
	\end{figure}
\end{minipage}}
    \centering
    \caption{\small Mesh and parametric representation of the hyperbolic paraboloid. On the left, the parametric mesh for $h=1/4$ is shown. On the right, the corresponding initial shell configuration is plotted. The clamped boundary is marked with a blue line and the point of interest for the evaluation is marked with a red dot in both figures.}
    \label{fig:HypPara_Mesh}
\end{figure}
The results of the convergence study of both thicknesses are shown in Fig. \ref{fig:ParaBolo_Res}. The computed results converge smoothly towards the reference solution for all polynomial orders of the basis functions. For the thin shell, the computation is converging less rapidly than for the thick shell. 
\begin{figure}[H]
\begin{minipage}{0.49\textwidth}
\begin{tikzpicture}
	\small \begin{semilogxaxis} [
		width = \textwidth,
		xlabel={\footnotesize Degrees of freedom},
		ylabel={\footnotesize $u_z/u_{z,ref}$},
		xmin=500, xmax=50000,
		ymin=0.2, ymax=1.1,
		xtick={10^3,10^4,10^5},
		ytick={0,0.2, 0.4, 0.6, 0.8, 1.0, 1.2},
		legend pos=south east,
		legend columns = 3,
		xmajorgrids=true,
		ymajorgrids=true,
		grid style=dashed,
		]
		\addplot
		coordinates {
		(540,-7.732771490412845e-04/-0.0063941) 
		(960,-0.001789482448689/-0.0063941) 
		(1500,-0.002970692239703/-0.0063941) 
		(2160,-0.004129769567185/-0.0063941) 
		(2940,-0.004780918925169/-0.0063941) 
		(3840,-0.005155343714999/-0.0063941) 
		(4860,-0.005392891940006/-0.0063941) 
		(6000,-0.005564929280986/-0.0063941) 
		(7260,-0.005697251488165/-0.0063941) 
		(8640,-0.005802438717773/-0.0063941) 
		(10140,-0.005886914226463/-0.0063941) 
		(11760,-0.005955703518588/-0.0063941) 
		(13500,-0.006012046032889/-0.0063941) 
		(15360,-0.006058660980425/-0.0063941) 
		(17340,-0.006097425205839/-0.0063941) 
        (19440,-0.006129929225940/-0.0063941) 
		(21660,-0.006157331512144/-0.0063941) 
		(24000,-0.006180605251743/-0.0063941) 
		(26460,-0.006200486599625/-0.0063941) 
		(29040,-0.006217585742579/-0.0063941) 
		(31740,-0.006232378738388/-0.0063941) 
		(34560,-0.006245255490631/-0.0063941) 
        (37500,-0.006256528811580/-0.0063941) 
		};
		\addlegendentry{$p=3$},
		\addplot
		coordinates {
		(960,-0.002950331475109/-0.0063941) 
		(1815,-0.004751152652467/-0.0063941) 
		(2940,-0.005375955670491/-0.0063941) 
		(4335,-0.005665648707705/-0.0063941) 
		(6000,-0.005846251757035/-0.0063941) 
		(7935,-0.005967908327856/-0.0063941) 
		(10140,-0.006055229992996/-0.0063941) 
		(12615,-0.006119559376012/-0.0063941) 
		(15360,-0.006168249020427/-0.0063941) 
		(18375,-0.006205553262372/-0.0063941) 
		(21660,-0.006234561319570/-0.0063941) 
		(25215,-0.006257313323045/-0.0063941) 
		(29040,-0.006275380827907/-0.0063941) 
		(33135,-0.006289874995586/-0.0063941) 
		(37500,-0.006301652844844/-0.0063941) 
        };
		\addlegendentry{$p=4$},
		\addplot[color=darkgray,mark=triangle*]
		coordinates {
		(1500,-0.004971217989138/-0.0063941) 
		(2940,-0.005653459741511/-0.0063941) 
		(4860,-0.005929151598975/-0.0063941) 
		(7260,-0.006077773402105/-0.0063941) 
		(10140,-0.006165365458615/-0.0063941) 
		(13500,-0.006220734925224/-0.0063941) 
		(17340,-0.006257873963323/-0.0063941) 
		(21660,-0.006283896836440/-0.0063941) 
		(26460,-0.006302745179638/-0.0063941) 
        };
		\addlegendentry{$p=5$},
		\addplot[color=black,mark=none]
		coordinates {
        (1,  1)
        (100000, 1)
        };
	\end{semilogxaxis}
\end{tikzpicture}
\caption*{\footnotesize{$t=1/1000$}}
\end{minipage}
\begin{minipage}{0.49\textwidth}
\begin{tikzpicture}
	\small \begin{semilogxaxis} [
		width = \textwidth,
		xlabel={\footnotesize Degrees of freedom},
		ylabel={\footnotesize $u_z/u_{z,ref}$},
		xmin=500, xmax=50000,
		ymin=0.2, ymax=1.1,
		xtick={10^3,10^4,10^5},
		ytick={0,0.2, 0.4, 0.6, 0.8, 1.0, 1.2},
		legend pos=south east,
		legend columns = 3,
		xmajorgrids=true,
		ymajorgrids=true,
		grid style=dashed,
		]
		\addplot
		coordinates {
		(540,-3.534389360239380e-05/-0.93137*10^-4) 
		(960,-6.164320570408263e-05/-0.93137*10^-4) 
		(1500,-7.371062220624791e-05/-0.93137*10^-4) 
		(2160,-8.024134045254622e-05/-0.93137*10^-4) 
		(2940,-8.399115985900662e-05/-0.93137*10^-4) 
		(3840,-8.628287291796832e-05/-0.93137*10^-4) 
		(4860,-8.776472649973871e-05/-0.93137*10^-4) 
		(6000,-8.877302713164594e-05/-0.93137*10^-4) 
		(7260,-8.949397123916558e-05/-0.93137*10^-4) 
		(8640,-9.002759457529679e-05/-0.93137*10^-4) 
		(10140,-9.043746816677340e-05/-0.93137*10^-4) 
		(11760,-9.075938287391117e-05/-0.93137*10^-4) 
		(13500,-9.101913002036965e-05/-0.93137*10^-4) 
		(15360,-9.123179523099181e-05/-0.93137*10^-4) 
		(17340,-9.140943730856496e-05/-0.93137*10^-4) 
        (19440,-9.155929893807210e-05/-0.93137*10^-4) 
		(21660,-9.168767679040022e-05/-0.93137*10^-4) 
		(24000,-9.179841868074240e-05/-0.93137*10^-4) 
		(26460,-9.189510618466120e-05/-0.93137*10^-4) 
		(29040,-9.197995166461474e-05/-0.93137*10^-4) 
		(31740,-9.205513366952135e-05/-0.93137*10^-4) 
		(34560,-9.212200622558474e-05/-0.93137*10^-4) 
        (37500,-9.218196699786979e-05/-0.93137*10^-4) 
		};
		\addlegendentry{$p=3$},
		\addplot
		coordinates {
		(960,-7.316968183582717e-05/-0.93137*10^-4) 
		(1815,-8.270919825524009e-05/-0.93137*10^-4) 
		(2940,-8.701286430353728e-05/-0.93137*10^-4) 
		(4335,-8.907554567301876e-05/-0.93137*10^-4) 
		(6000,-9.018887616638319e-05/-0.93137*10^-4) 
		(7935,-9.085080185827809e-05/-0.93137*10^-4) 
		(10140,-9.128286540173374e-05/-0.93137*10^-4) 
		(12615,-9.158486634357766e-05/-0.93137*10^-4) 
		(15360,-9.180735366188653e-05/-0.93137*10^-4) 
		(18375,-9.197775536918955e-05/-0.93137*10^-4) 
		(21660,-9.211228139470124e-05/-0.93137*10^-4) 
		(25215,-9.222103402370418e-05/-0.93137*10^-4) 
		(29040,-9.231066337995258e-05/-0.93137*10^-4) 
		(33135,-9.238571327742426e-05/-0.93137*10^-4) 
		(37500,-9.244940028145973e-05/-0.93137*10^-4) 
        };
		\addlegendentry{$p=4$},
		\addplot[color=darkgray,mark=triangle*]
		coordinates {
		(1500,-8.444098913758520e-05/-0.93137*10^-4) 
		(2940,-8.883443865593166e-05/-0.93137*10^-4) 
		(4860,-9.050025741601437e-05/-0.93137*10^-4) 
		(7260,-9.128280020694893e-05/-0.93137*10^-4) 
		(10140,-9.172293122120323e-05/-0.93137*10^-4) 
		(13500,-9.200281452251640e-05/-0.93137*10^-4) 
		(17340,-9.219584760730372e-05/-0.93137*10^-4) 
		(21660,-9.233663610152000e-05/-0.93137*10^-4) 
		(26460,-9.244357805155399e-05/-0.93137*10^-4) 
        };
		\addlegendentry{$p=5$},
		\addplot[color=black,mark=none]
		coordinates {
        (1,  1)
        (100000, 1)
        };
	\end{semilogxaxis}
\end{tikzpicture}
\caption*{\footnotesize{$t=1/100$}}
\end{minipage}
    \caption{\small Example of the hyperbolic paraboloid. On both sides the convergence studies for orders of $p=3$, $p=4$, and $p=5$ with $r=1$ are shown for the thin shell (left) and the thick shell (right).}
   \label{fig:ParaBolo_Res}
\end{figure}

\begin{figure}[H]
    \centering
\begin{minipage}{0.49\textwidth}
\begin{tikzpicture}
	\small \begin{semilogxaxis} [
		width = \textwidth,
		xlabel={\footnotesize Degrees of freedom},
		ylabel={\footnotesize $u_z/u_{z,ref}$},
		xmin=500, xmax=50000,
		ymin=0.2, ymax=1.1,
		xtick={10^3,10^4,10^5},
		ytick={0,0.2, 0.4, 0.6, 0.8, 1.0, 1.2},
		legend pos=south east,
		legend columns = 2,
		xmajorgrids=true,
		ymajorgrids=true,
		grid style=dashed,
		]
		\addplot
		coordinates {
		(540,-3.534389360239380e-05/-0.93137*10^-4) 
		(1500,-7.371062220624791e-05/-0.93137*10^-4) 
		(4860,-8.776472649973871e-05/-0.93137*10^-4) 
		(17340,-9.140943730856496e-05/-0.93137*10^-4) 
        (37500,-9.218196699786979e-05/-0.93137*10^-4) 
		};
		\addlegendentry{$p=3$},
		\addplot
		coordinates {
		(960,-7.316968183582717e-05/-0.93137*10^-4) 
		(2940,-8.701286430353728e-05/-0.93137*10^-4) 
		(10140,-9.128286540173374e-05/-0.93137*10^-4) 
		(37500,-9.244940028145973e-05/-0.93137*10^-4) 
        };
		\addlegendentry{$p=4$},
        \addplot[color=blue,mark=o]
        coordinates {
        (505, 0.476732585)
        (2211, 0.744890925)
        (9076, 0.887860836)
        (36691, 0.954095118)
        };
        \addlegendentry{$p=3$, SAS},
        \addplot[color=red,mark=square]
        coordinates {
        (660, 0.656387403)
        (2443, 0.847854991)
        (9540, 0.936313635)
        (37304, 0.972265541)
        };
        \addlegendentry{$p=4$, SAS},
		\addplot[color=black,mark=none]
		coordinates {
        (1,  1)
        (100000, 1)
        };
	\end{semilogxaxis}
\end{tikzpicture}
\caption*{\footnotesize{(a) $t=1/100$}}
\end{minipage}
\begin{minipage}{0.45\textwidth}
\vspace{-1cm}
		\centering
        \begin{tikzpicture}
      \node (myfirstpic) at (0,0) {\includegraphics[width=\textwidth]{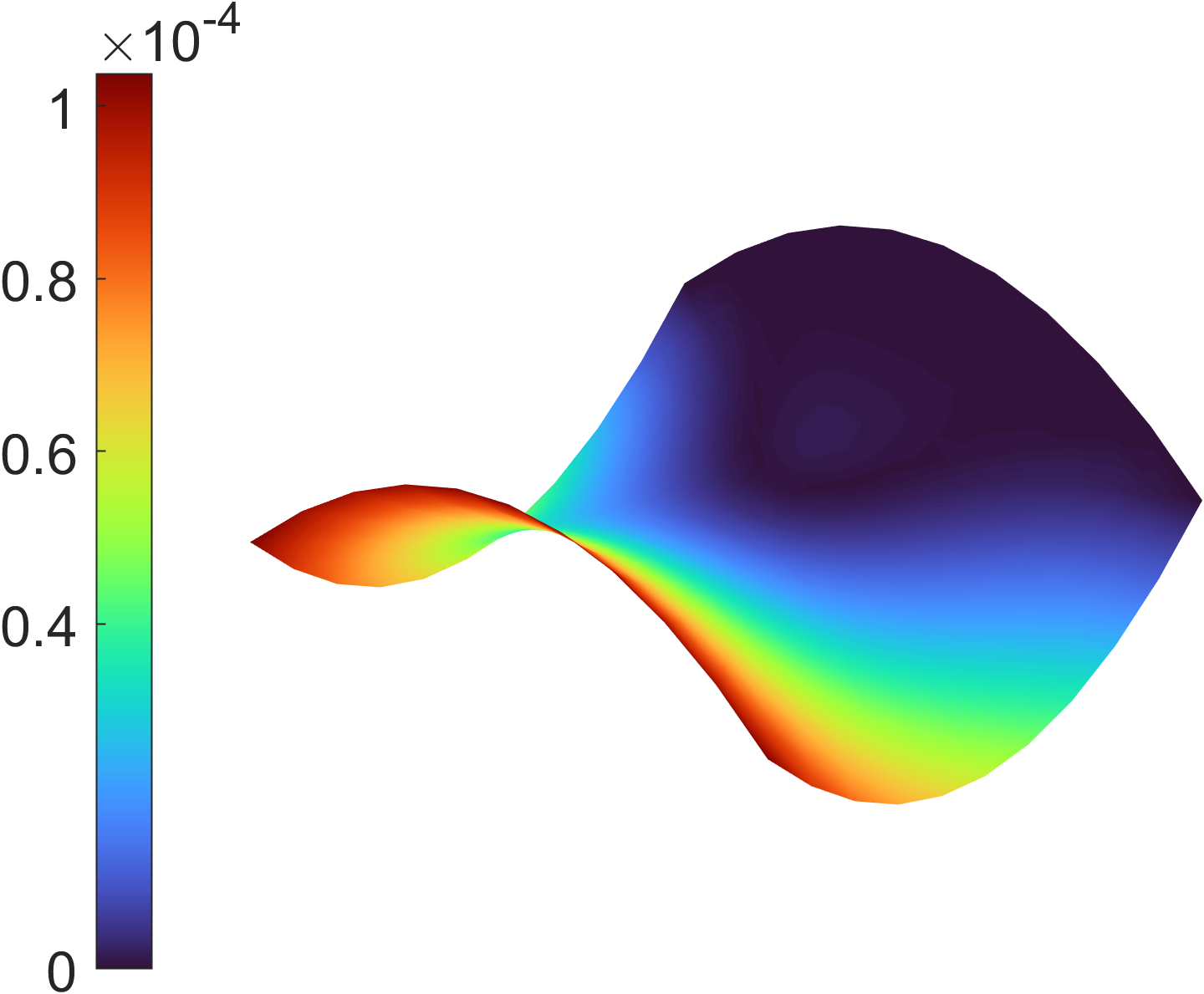}};
      \node at (-2.2,0) {$|\mathbf{u}|$};
    \end{tikzpicture}
		\caption*{\footnotesize{(b) Deformation magnitude $t=1/100$}}
\end{minipage}
\caption{\small Comparison of the results from the proposed approach with the standard analysis-suitable $G^1$ for Kirchhoff-Love shells proposed in \cite{Farahat2022} for a thickness of $t= 1/100$ (left) and an exemplary deformation plot of the overall deformation $u$ for $p=3$ and $h=1/24$ (right).}
    \label{fig:CompFH}
\end{figure}
Further, the proposed approach is compared to the analysis-suitable $G^1$ approach for standard IGA patches. Fig. \ref{fig:CompFH} shows a comparison of the results for the order of $p=3$ and $p=4$. It is important to note, that the approach herein has a regularity of $r=1$ for various orders, while the compared approach has a regularity of $r=p-2$. The comparison shows, that the Kirchhoff-Love shell in boundary representation seems to converge faster towards the reference solution than the approximate solutions from \cite{Farahat2022} utilizing non-degenerate IGA multi-patches.

\subsection{Flat shell with multiple holes}
To take the trimming more into account, a flat shell structure with an asymmetrical arrangement is evaluated following \cite{man2013}. The model is presented in Fig. \ref{fig:MultHole_Mesh} with the parametric mesh and the corresponding deformation. The quadratic shell of side length $L = 5$ and thickness $t = 0.005$ is trimmed by 16 circular holes of radius $r=0.025$ and clamped at all four sides. The material parameters are $E = 8.736\cdot 10^{7}$ and $\nu = 0.3$. The structure is loaded by a vertical load of $g_z = \sin\left(\pi x \right) \cdot \sin\left(\pi y \right)$.
\begin{figure}
\centering
\scalebox{.9}{
\hspace*{-0.95cm}
\begin{minipage}{0.49\textwidth}
	\begin{figure}[H]
		\centering
			\begin{tikzpicture}
			\node (myfirstpic) at (0,0) {\includegraphics[width=\textwidth]{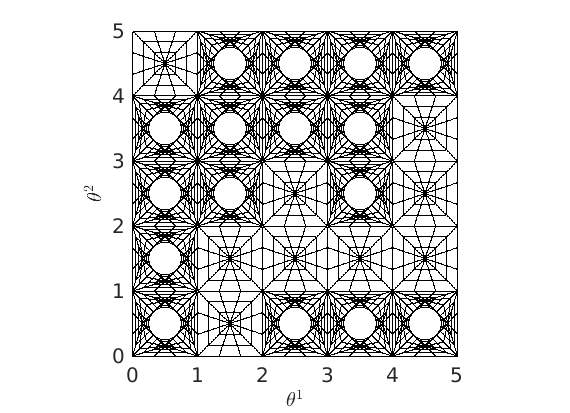}};
			\node at (0.15,2.7) {${\o}$};
		\end{tikzpicture}
		\caption*{\footnotesize{(a) Parametric mesh}}
	\end{figure}
\end{minipage}\hspace*{-1cm}
\begin{minipage}{1cm}
\vspace{-3cm}
  \begin{tikzpicture}
  \end{tikzpicture}
\end{minipage}
\begin{minipage}{0.49\textwidth}
	\begin{figure}[H]
		\centering
    \begin{tikzpicture}
	\node (myfirstpic) at (0,0) {\includegraphics[width=\textwidth]{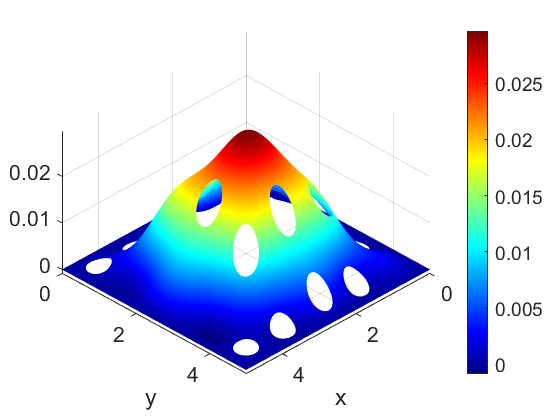}};
    \node at (2.2,0.0) {$u_z$};
	\end{tikzpicture}
		\caption*{\footnotesize{(b) Deformation plot}}
	\end{figure}
\end{minipage}}
    \centering
    \caption{\small Parametric representation of the flat shell with multiple holes and an exemplary deformation plot with $p=3$ and $h=1/3$.}
    \label{fig:MultHole_Mesh}
\end{figure}
To compare the results to reference solutions from the literature  \cite{man2013}, a normalization of the vertical deformation is done as
\begin{equation}
    u_z^* = \frac{u_z D}{L^4},
\end{equation}
with $u_z$ as the computed deflection in $z$-direction, $D$ the flexural stiffness and $L$ the side length of the shell. Fig. \ref{fig:MultComp} shows the results of the proposed approach for the deformation $u_z$ and to compare the results to the reference solution from the literature, the results are evaluated in terms of a normalization. We refer to the reference solution of an SB-FEM plate formulation with scaling in the thickness direction and semi-analytic solution procedure presented in \cite[Figure 29 (a)]{man2013}. Good agreement is visible for the proposed approach considering maximal normalized deflection and the contour lines.
\begin{figure}[H]
\hspace*{-0.95cm}
\begin{minipage}{0.44\textwidth}
	\begin{figure}[H]
		\begin{tikzpicture}
            \node (myfirstpic) at (0,0) {\includegraphics[width=\textwidth]{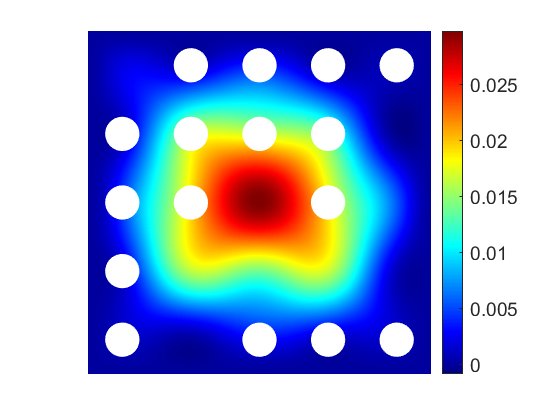}};
            \node at (3.2,0) {$u_z$};
    	\end{tikzpicture}
		\caption*{\footnotesize{(a) Proposed approach with $p=3$ and $h=1/3$}}
	\end{figure}
\end{minipage}\hspace*{-1cm}
\begin{minipage}{1cm}
\vspace{-3cm}
  \begin{tikzpicture}
  \end{tikzpicture}
\end{minipage}
\begin{minipage}{0.45\textwidth}
	\begin{figure}[H]
    \begin{tikzpicture}
\node (myfirstpic) at (0,0) {\includegraphics[width=\textwidth]{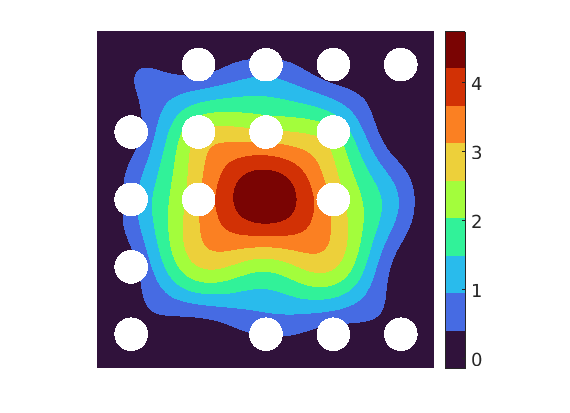}}; 
\node at (2.85,0) {$u_z^*$};
	\end{tikzpicture}
		\caption*{\footnotesize{(b) Normalized displacement $u_z^*$ of (a)}}
	\end{figure}
\end{minipage}
    \centering
    \caption{\small Left, the deformation $u_z$ is shown for the flat shall with multiple holes. Right, the normalized vertical displacements $u_z^*$ of the proposed method look similar to results by an SBFEM formulation using normal scaling presented in \cite{man2013}.}
    \label{fig:MultComp}
\end{figure}

\subsection{Violin}
In the last example, we utilize the boundary representation in the most sophisticated extent by evaluating a sketch of a violin following the idea of \cite{Coradello2020hierarchically} with modifications for the boundary conditions and the parametrization functions. The violin in this contribution is clamped at the whole outer boundary and trimmed by two so-called "F-holes". It is loaded by a constant vertical dead load of $g_z = 0.05$ per unit area. The parameters are $t=0.25$, $E = 10^5$ and $\nu = 0.1$. The underlying parametrization function of the violin is
\begin{equation}
    \mathbf{R}(\theta^1, \theta^2) = \left[ \theta^1 \qquad \theta^2 \qquad 2\exp\left(-0.0025\left(\theta^1\right)^2 \right)\cdot 2\exp\left(-0.01\left(\theta^2\right)^2 \right) \right]. 
\end{equation}
The parametric mesh and the corresponding initial shell configuration are shown in Fig. \ref{fig:Violin_Mesh}. The advantages of the scaled boundary approach are especially visible due to the various shapes of the SB-patches including triangles, quadrilaterals, and pentagons. Further, the outer boundary curve of the violin contains vertices that are easily included in the computational domain.
Even though there is no reference solution, the results seem reasonable as the deformation is symmetric in the $y$-axis, and the boundary conditions are fulfilled. 
\vspace{-0.8cm}
\begin{figure}[H]
\centering
\scalebox{1}{
\hspace*{-0.95cm}
\begin{minipage}{0.49\textwidth}
	\begin{figure}[H]
		\centering
			\begin{tikzpicture}
            \node (myfirstpic) at (0-1,0) {\includegraphics[width=\textwidth]{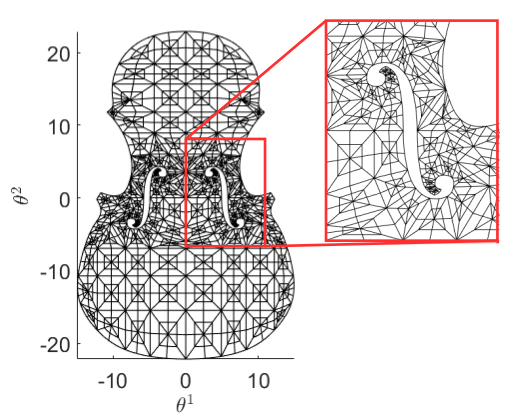}};
			\node at (0.15-1-1.1,2.7+0.1) {$\tilde{\o}$};
		\end{tikzpicture}
		\caption*{\footnotesize{(a) Parametric mesh}}
	\end{figure}
\end{minipage}\hspace*{-1cm}
\begin{minipage}{1cm}
\vspace{-3cm}
  \begin{tikzpicture}
  \end{tikzpicture}
\end{minipage}
\begin{minipage}{0.49\textwidth}
	\begin{figure}[H]
		\centering
    \begin{tikzpicture}
	\node (myfirstpic) at (0,0) {\includegraphics[width=\textwidth]{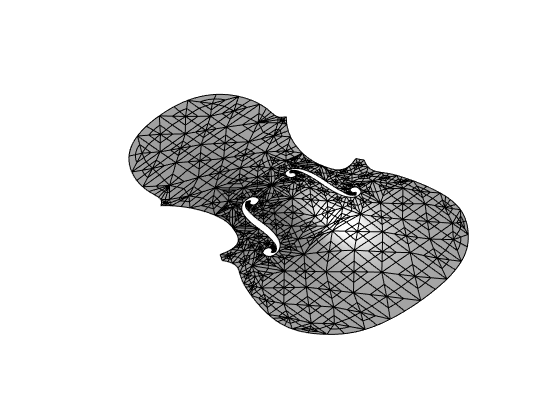}};
    \draw[->,very thick, out=30,in=150] (-1-2.5,1) to (0-2.5,1);
    \node at (-3,1.4) {$\mathbf{R}$};
	\end{tikzpicture}
		\caption*{\footnotesize{(b) Initial shell configuration}}
	\end{figure}
\end{minipage}}
    \centering
    \caption{\small Parametric representation and initial shell configuration of the violin. On the left, the parametric mesh for $h=1/2$ is shown. On the right, the corresponding initial shell configuration is shown. The violin is clamped at the outer boundary in all directions including rotations.}
    \label{fig:Violin_Mesh}
\end{figure}
\begin{figure}[H]
\vspace{-1.4cm}
\hspace*{-0.95cm}
\begin{minipage}{0.4\textwidth}
	\begin{figure}[H]
			\begin{tikzpicture}
            \node (myfirstpic) at (0,0) {\includegraphics[width=\textwidth]{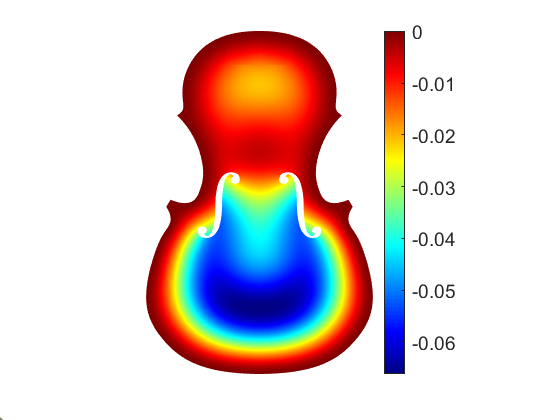}};
            \node at (2.3,0) {$u_z$};
    	\end{tikzpicture}
	\end{figure}
\end{minipage}\hspace*{-1cm}
\begin{minipage}{1cm}
  \begin{tikzpicture}
  \end{tikzpicture}
\end{minipage}
\begin{minipage}{0.55\textwidth}
	\begin{figure}[H]
    \begin{tikzpicture}
    \node (myfirstpic) at (0,0) {\includegraphics[width=\textwidth]{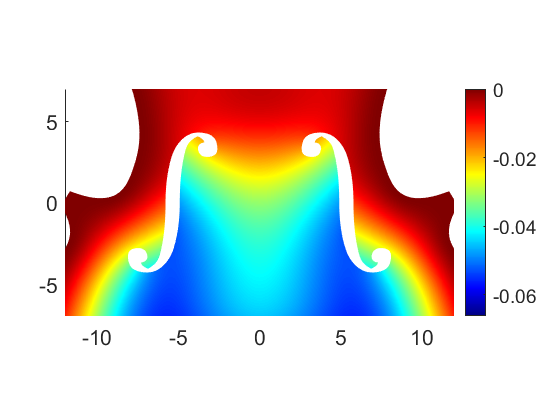}}; 
    \node at (2.3,1) {$u_z$};
	\end{tikzpicture}
	\end{figure}
\end{minipage}
\vspace{-1.0cm}
    \centering
    \caption{\small Deformation plot of the proposed approach for a violin with $p=3$ and $h=1/2$.}
    \label{fig:Ex_5_Mesh}
\end{figure}

\section{Conclusions}\label{Section:ConOut}
In this contribution, a Kirchhoff-Love shell was derived in the framework of scaled boundary isogeometric analysis. To ensure $C^1$-continuity across patches, a coupling of the scaled boundary geometries was applied that fits the concept of analysis-suitable $G^1$  parametrizations from \cite{Collin2016} and a special treatment of the basis functions in the vicinity of the scaling center was implemented. The benefits of the boundary representation technique are pointed out by the incorporation of trimming curves as part of the geometry. The shell formulation is at first tested in general against optimal convergence rates of the $H^2$ seminorm and the $L^2$ norm. Further, a comparison to the Kirchhoff-Love shell proposed by \cite{Kiendl2009} and in the terms of multi-patch structures \cite{Farahat2022} is done. The application of trimming is presented in several examples and good agreement with examples from the literature is obtained. The method is especially powerful when it comes to multi-patch geometries that cannot easily be described by a single IGA patch which is outlined by an example of a violin structure with trimmed F-holes. \\
In further work, an exact representation of the shells in the physical domain is pending. A procedure to adjust the planar domain and the corresponding parametrization according to the physical domain will improve the applicability of the approach. Besides, the formulation of the Kirchhoff-Love shell may be extended to geometric and material nonlinearity to show the formulations' power not only for the linear case but to make it applicable to more advanced shell problems including buckling or plastic deformations.


\paragraph{Declarations}
\vspace{-0.1cm}
\paragraph{Funding} This study was funded by the DFG (German Research Foundation) under Grant No. KL1345/10-2 (project number: 667493) and Grant No. SI756/5-2 (project number: 667494).
\vspace{-0.2cm}
\paragraph{Conflict of Interests} The authors declare that they have no conflict of interest.
\vspace{-0.4cm}
\bibliographystyle{IEEEtran}
\bibliography{Manuscript}

\end{document}